\documentclass[11pt]{amsart}
\usepackage[margin=1in]{geometry}
\usepackage{amsfonts, amstext, amsmath, amssymb}
\usepackage{graphicx}

\newtheorem{lemma}{Lemma}
\newtheorem{theorem}[lemma]{Theorem}
\newtheorem{proposition}[lemma]{Proposition}
\newtheorem{corollary}[lemma]{Corollary}

\begin{document}

\title{Constructing the demand function of a strictly convex preference relation}

\author[Hendtlass]{Matthew Hendtlass}
 \address{School of Mathematics and Statistics,
    University of Canterbury,
    Christchurch 8041,
    New Zealand}
\email{matthew.hendtlass@canterbury.ac.nz}

\begin{abstract}
\noindent
We give conditions under which the demand function of a strictly convex preference relation can be constructed.
\end{abstract}

\maketitle

\section*{Introduction}

\noindent
This paper gives conditions under which the demand function of a strictly convex preference relation can be constructed, and should be seen as a continuation of the work of Douglas Bridges \cite{dsb_82,dsb_89,dsb_ctsdem,dsb_94} to examine aspects of mathematical economics in a rigorously constructive manner, see also \cite{HM}. In particular, Bridges considered the problem that we consider here in \cite{dsb_ctsdem}. Corollary \ref{DF2} is a generalisation of the main result of \cite{dsb_ctsdem} and our proof, although less elegant, is also somewhat simpler. 

\bigskip
\noindent
Following Bridges we take, as our starting point, the standard configuration in microeconomics consisting of a consumer whose consumption set $X$ is a compact, convex subset of $\mathbf{R}^n$ ordered by a strictly ordered preference relation $\succ$. For a given price vector $p\in\mathbf{R}^n$ and a given initial endowment $w$, the consumers \emph{budget set}
 $$
  \beta(p,w)=\{x\in X:p\cdot x\leqslant w\}
 $$
\noindent
is the collection of all consumption bundles available to the consumer. 

\bigskip
\noindent
As detailed in \cite{dsb_ctsdem}, it is easy to show that classically, if $\beta(p,w)\neq\emptyset$, then there exists a unique $\succ$-\emph{maximal} point $\xi_{p,w}\in\beta(p,w)$: $\xi_{p,w}\succcurlyeq x$ for all $x\in\beta(p,w)$. Let $T$ be the set of pairs consisting of a price vector $p$ and an initial endowment $w$ for which $\beta(p,w)$ is inhabited. If the preference relation $\succ$ is continuous, then a sequential compactness argument gives the sequential, and hence pointwise, continuity of the \emph{demand function} $F$ on $T$ which sends $(p,w)$ to the maximal element $\xi_{p,w}$ of $\beta(p,w)$ (see, for example, chapter 2, section D of \cite{Takayama}). 

\bigskip
\noindent
Bridges asked under what conditions can we
 \begin{itemize}
  \item[1.] Compute the demand function $F$;
  \item[2.] Compute a modulus of uniform continuity for $F$: given $\varepsilon>0$, can we produce $\delta>0$ such that if $(p,w),(p^\prime,w^\prime)\in T$ with $\Vert(p,w)-(p^\prime,w^\prime)\Vert<\delta$, then $\Vert F(p,w)-F(p^\prime,w^\prime)\Vert<\varepsilon$.
 \end{itemize}
\noindent
In \cite{dsb_ctsdem} Bridges introduced the notion of a uniformly rotund preference relation and showed that if $\succ$ is uniformly rotund and you restrict $F$ to a compact subset of $T$ on which the consumer cannot be satiated, then $F$ is uniformly continuous. Theorem \ref{DF2} shows that we do not need the hypothesis that our consumer is nonsatiated. Theorems \ref{t1} and \ref{DF1} encapsulate what we can say about strictly convex preference relations, which is more than one might think.

\bigskip
\noindent
We work in Bishop's style constructive mathematics. Any proof in this framework embodies an algorithm, so when we show that there exists $x$ such that $P(x)$, our proof gives an explicit construction of an object $x$ together with a proof that $P(x)$ holds. Formally we take Bishop's constructive mathematics to be Aczel's constructive Zermelo-Fraenkel set theory (\textbf{CZF}) with intuitionistic logic and the axiom of dependent choice \cite{AR}. By interpreting \textbf{CZF} in Martin-L\"{o}f type theory \cite{A_CZF}, the algorithmic nature of our proofs can be made explicit; and techniques for programme extraction from such proofs have been well studied \cite{HS}. We direct the reader to \cite{BB,BV} for an introduction to the practice of Bishop's constructive mathematics, and to \cite{dsb_82,dsb_89} for an introduction to Bridges' programme to constructivise mathematical economics.

\bigskip
\noindent
A \emph{preference relation} $\succ$ on a set $X$ is a binary relation which is
 \begin{itemize}
  \item[$\blacktriangleright$] asymmetric: if $x\succ y$, then $\neg(y\succ x)$;
  \item[$\blacktriangleright$] negatively transitive: if $x\succ y$, then for all $z$ either $x\succ z$ or $z\succ y$.
 \end{itemize}
\noindent
If $x\succ y$, we say that $x$ is \emph{preferable} to $y$. We write $x\succcurlyeq y$, $x$ is \emph{preferable or indifferent to} $y$, for $\neg(y\succ x)$. We note that $x\succ x$ is contradictory, that $\succ$ and $\succcurlyeq$ are transitive, and that if either $x\succcurlyeq y\succ z$ or $x\succ y\succcurlyeq z$, then $x\succ z$.

\bigskip
\noindent
Let $\succ$ be a preference relation on a subset $X$ of $\mathbf{R}^N$.
 \begin{itemize}
  \item[$\blacktriangleright$] $\succ$ is a \emph{continuous preference relation} if the graph 
   $$
    \{(x,x^\prime):x\succ x^\prime\}
   $$
  \noindent
  of $\succ$ is open.
 \item[$\blacktriangleright$] $\succ$ is \emph{strictly convex} if $X$ is convex and $tx+(1-t)x^\prime\succ x$ or $tx+(1-t)x^\prime\succ x^\prime$ whenever $x,x^\prime\in X$ with $x\neq x^\prime$ and $t\in(0,1)$.
 
 \item[$\blacktriangleright$] $X$ is \emph{uniformly rotund} if for each $\varepsilon>0$ there exists $\delta>0$ such that for all $x,x^\prime\in X$, if $\Vert x-x^\prime\Vert\geqslant\varepsilon$, then 
 $$
  \left\{\frac{1}{2}\left(x+x^\prime\right)+z:z\in B(0,\delta)\right\}\subset X,
 $$
\noindent
where $B(x,r)$ is the open ball of radius $r$ centred on $x$. The preference relation $\succ$ is uniformly rotund if $X$ is uniformly rotund and for each $\varepsilon>0$ there exists $\delta>0$ such that if $\Vert x-x^\prime\Vert\geqslant\varepsilon$ ($x,x^\prime\in X$), then for each $z\in B(0,\delta)$ either $\frac{1}{2}\left(x+x^\prime\right)+z\succ x$ or $\frac{1}{2}\left(x+x^\prime\right)+z\succ x^\prime$.
 \end{itemize}
\noindent
A uniformly rotund preference relation is strictly convex. 

\bigskip
\noindent
A set $S$ is said to be \emph{inhabited} if there exists $x$ such that $x\in S$. An inhabited subset $S$ of a metric space $X$ is \emph{located} if for each $x\in X$ the \emph{distance} 
\begin{equation*}
\rho \left( x,S\right) =\inf \left\{ \rho (x,s):s\in S\right\}
\end{equation*}%
\noindent 
from $x$ to $S$ exists. If $X$ is located and its \emph{metric complement} 
 $$
  -X=\{x\in\mathbf{R}^n:\rho(x,X)>0\}
 $$
\noindent
is also located, then $X$ is said to be \emph{bilocated}. An $\varepsilon $%
\emph{-approximation} to $S$ is a subset $T$ of $S$ such
that for each $s\in S$, there exists $t\in T$ such that $\rho
(s,t)<\varepsilon $. We say that $S$ is \emph{totally bounded} if for each $%
\varepsilon >0$ there exists a finitely enumerable\footnote{A set is \emph{finitely enumerable} if it is the image of $\{1,\ldots,n\}$ for some $n\in N$, and a set is \emph{finite} if it is in bijection with $\{1,\ldots,n\}$ for some $n\in N$; constructively these notions are distinct.} $\varepsilon $-approximation to $S$; totally bounded sets are located. A metric space $X$ is \emph{compact} if it is complete and totally bounded. We will use $\Vert\cdot\Vert$ to represent the Euclidean norm, $\Vert\cdot\Vert_1$ for the norm $x\mapsto\sum_{i=1}^nx_i$ on $\mathbf{R}^n$, and we write $\rho,\rho_1$ for the respective induced metrics.

\section*{Constructing maxima}

\noindent
In this section we focus on the construction of maximally prefered elements of a consumption set $X$. Our main result is

\begin{theorem}
\label{t1}
Let $\succ$ be a continuous, strictly convex preference relation on an inhabited, compact subset $X$ of Euclidean space. Then there exists a unique $\xi\in X$ such that $\xi\succcurlyeq x$ for all $x\in X$.
\end{theorem}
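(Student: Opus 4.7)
The plan is to separate uniqueness from existence.

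Uniqueness is immediate from strict convexity: if $\xi,\xi'\in X$ are both $\succ$-maximal and $\rho(\xi,\xi')>0$, then $\tfrac12(\xi+\xi')\in X$ by convexity and, by strict convexity, is strictly preferred to at least one of $\xi,\xi'$, contradicting the maximality of that endpoint. Hence $\rho(\xi,\xi')=0$.

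For existence, the plan is to construct, for each $n\in\mathbf{N}$, a point $x_n\in X$ satisfying the \emph{$1/n$-approximate-maximality} property
\[
  y\in X,\ y\succ x_n \ \Longrightarrow\ \rho(y,x_n)<1/n,
\]
and take $\xi=\lim_n x_n$. Two short checks then finish the proof. First, $(x_n)$ is Cauchy: if $\rho(x_n,x_m)\geq 2\max(1/n,1/m)$, the midpoint is at distance at least $\max(1/n,1/m)$ from each, but strict convexity forces it to strictly beat one of them, contradicting the approximate maximality of that endpoint; completeness of $X$ then yields $\xi=\lim_n x_n\in X$. Second, $\xi$ is maximal: if $y\in X$ satisfied $y\succ\xi$, openness of the graph of $\succ$ would give $y\succ x_n$ for all large $n$, whence $\rho(y,x_n)<1/n$, forcing $y=\xi$ and the contradiction $\xi\succ\xi$; so $\xi\succcurlyeq y$ for every $y\in X$.

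The substantive content, and what I expect to be the main obstacle, is the construction of $x_n$. The only decidable comparison the hypotheses afford is the strict-convexity disjunction: for distinguishable $u,v\in X$, either $\tfrac12(u+v)\succ u$ or $\tfrac12(u+v)\succ v$. The plan is to leverage this together with total boundedness. Fix $n$; take a finite $\delta$-approximation $F=\{y_1,\ldots,y_k\}$ of $X$ with $\delta$ small relative to $1/n$; and iteratively refine a candidate $z\in X$ by scanning $F$: at each step, cotransitivity of the real order reduces to the case $\rho(z,y_i)>0$, and strict convexity then supplies the disjunction
\[
  \tfrac12(z+y_i)\succ z\quad\text{or}\quad \tfrac12(z+y_i)\succ y_i,
\]
so one either updates $z$ to the strictly preferred midpoint or leaves $z$ alone (the latter certifying that the midpoint beats $y_i$). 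Because $\succ$-improvements halve the distance from $z$ to the $y_i$ being used, a calibrated bounded number of passes through $F$ should produce a candidate that dominates $F$ in the midpoint sense, and continuity of $\succ$ combined with the $\delta$-approximation should then promote this finite domination to $1/n$-approximate maximality on all of $X$. The delicate point will be calibrating $\delta$ and the number of passes, and organising the midpoint bookkeeping so that the passage from $F$ to $X$ via continuity goes through cleanly.
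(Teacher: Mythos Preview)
Your uniqueness argument and the reduction of existence to constructing $1/n$-approximately-maximal points are both sound; the Cauchy estimate and the maximality of the limit go through exactly as you say. The gap is entirely in the construction of $x_n$, and it is real. There are two concrete obstructions. First, the midpoint scan over a finite $\delta$-net $F$ has no constructive termination bound: an update $z\mapsto\tfrac12(z+y_i)$ halves the distance to that particular $y_i$ but can increase the distance to every other $y_j$, so ``halving'' is not a global progress measure, and the fact that the successive candidates are $\succ$-increasing gives no rate. Second, and more seriously, even a $z$ with $\tfrac12(z+y_i)\succ y_i$ for every $y_i\in F$ need not satisfy $z\succcurlyeq y_i$: from $\tfrac12(z+y_i)\succ y_i$ together with $y_i\succ z$ one deduces only $\tfrac12(z+y_i)\succ z$, which is no contradiction. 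Promoting whatever finite information you do obtain on $F$ to $1/n$-approximate maximality on all of $X$ ``by continuity'' would require either a uniform modulus for the openness of the graph of $\succ$ (not assumed) or openness of the graph of $\succcurlyeq$ (false in general). Either way the $\delta$ you need depends on the candidate $z$ that has not yet been produced, so the calibration is circular.

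The paper's argument is structurally quite different: it inducts on the ambient dimension. The one-dimensional base case is a nested-interval construction whose engine is a short combinatorial lemma: applying strict convexity at $1/4,1/2,3/4$ forces either $1/2\succ1/4\succ0$ or $1/2\succ3/4\succ1$, so the midpoint weakly dominates an entire quarter of the interval, which is then discarded; iterating yields a shrinking sequence of subintervals whose common point is the maximum. For the inductive step one pushes $\succ$ forward to the interval $\pi_1(X)$ using the slice-maxima supplied by the induction hypothesis, applies the one-dimensional case there to locate the optimal first coordinate, and finishes inside that slice. The essential difference from your scheme is that convexity on an interval lets one discard whole subintervals at once rather than individual sample points, and this geometric leverage is exactly what substitutes for the uniformity your approach would need.
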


\noindent
Our proof proceeds by induction. The following lemma provides the key to proving the one dimensional case.

\begin{lemma}
\label{l1}
Let $\succ$ be a strictly convex preference relation on $[0,1]$. Then either $1/2\succcurlyeq x$ for all $x\in[0,1/4)$ or $1/2\succcurlyeq x$ for all $x\in(3/4,1]$.
\end{lemma}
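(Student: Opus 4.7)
The plan is to extract everything from two applications of strict convexity. First, I would apply strict convexity to the pair of points $1/4$ and $3/4$ with parameter $t = 1/2$, noting that $1/2 = \tfrac{1}{2}(1/4) + \tfrac{1}{2}(3/4)$. This yields the constructive dichotomy $1/2 \succ 1/4$ or $1/2 \succ 3/4$, and these two cases correspond precisely to the two conclusions of the lemma.

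Consider the case $1/2 \succ 1/4$; I claim that $1/2 \succcurlyeq x$ for every $x \in [0, 1/4)$. Since $1/2 \succcurlyeq x$ abbreviates $\neg(x \succ 1/2)$, I may fix $x \in [0, 1/4)$, assume $x \succ 1/2$, and derive a contradiction. Transitivity of $\succ$, combined with the case hypothesis, gives $x \succ 1/4$. But $1/4$ lies strictly between $x$ and $1/2$, so I can solve $1/4 = t x + (1-t)(1/2)$ explicitly with $t = 1/\bigl(4(1/2 - x)\bigr)$, which lies in $[1/2, 1) \subset (0,1)$ for $x \in [0, 1/4)$. A second application of strict convexity (with $x \neq 1/2$) produces $1/4 \succ x$ or $1/4 \succ 1/2$; the second disjunct is incompatible with $1/2 \succ 1/4$ by asymmetry, so $1/4 \succ x$, which contradicts $x \succ 1/4$ by asymmetry again. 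The case $1/2 \succ 3/4$ is entirely symmetric: for $y \in (3/4, 1]$ with $y \succ 1/2$, transitivity gives $y \succ 3/4$, and writing $3/4 = t y + (1-t)(1/2)$ with $t = 1/\bigl(4(y - 1/2)\bigr) \in [1/2, 1)$ yields the same form of contradiction.

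The only delicate point is ensuring that the parameter $t$ produced in the second convex combination really lies in the open interval $(0,1)$ demanded by strict convexity; this is precisely what forces the half-open intervals $[0, 1/4)$ and $(3/4, 1]$ in the statement, since allowing $x = 1/4$ or $y = 3/4$ would force $t = 1$. No other constructive obstacles arise: all the disjunctions used are supplied directly by strict convexity, and every step that looks like proof by contradiction is in fact a derivation of $\bot$ from an assumption of the form $x \succ 1/2$, which is legitimate when the goal is the negation $\neg(x \succ 1/2)$.
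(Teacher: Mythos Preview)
Your proof is correct and follows essentially the same route as the paper: obtain the dichotomy $1/2\succ 1/4$ or $1/2\succ 3/4$ from strict convexity, then in each case derive a contradiction from $x\succ 1/2$ via a second application of strict convexity at the point $1/4$ (respectively $3/4$) lying strictly between $x$ and $1/2$. The only difference is cosmetic: the paper opens with three applications of strict convexity and packages the case hypothesis as the chain $1/2\succ 1/4\succ 0$ (respectively $1/2\succ 3/4\succ 1$), but the extra links $1/4\succ 0$ and $3/4\succ 1$ are never used in the contradiction step, so your single initial application is in fact all that is needed.
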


\begin{proof}
Applying the strict convexity of $\succ$ to $1/4\in(0,3/4),1/2\in(1/4,3/4),3/4\in(1/2,1)$ yields
 \begin{eqnarray*}
  1/4\succ 0 & \mathrm{or} & 1/4\succ 3/4;\\
  1/2\succ 1/4 & \mathrm{or} & 1/2\succ 3/4;\\
  3/4\succ 1/4 & \mathrm{or} & 3/4\succ 1.
 \end{eqnarray*}
\noindent
It follows that either $1/2\succ1/4\succ0$ or $1/2\succ3/4\succ1$. In the first case suppose that $x\succcurlyeq1/2$ for some $x\in[0,1/4)$. Then, by the strict convexity and transitivity of $\succ$,  $1/4\succ 1/2$; this contradiction ensures that $1/2\succcurlyeq x$ for all $x\in[0,1/4)$. Similarly, in the second case $1/2\succcurlyeq x$ for all $x\in(3/4,1]$.
\end{proof}

\begin{lemma}
\label{lb}
 If $\succ$ is a strictly convex, continuous preference relation on $[0,1]$, then there exists $\xi\in[0,1]$ such that $\xi\succcurlyeq x$ for all $x\in [0,1]$.
\end{lemma}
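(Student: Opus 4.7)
The plan is to iterate Lemma \ref{l1} to construct a nested sequence of closed subintervals $I_n = [a_n, b_n] \subseteq [0,1]$ whose diameters shrink geometrically, and to take $\xi$ as the unique point of $\bigcap_n I_n$. The principal invariant I will maintain is that for every $y \in [0,1]$ and every $n$ there exists $z_n \in I_n$ with $z_n \succcurlyeq y$; continuity of $\succ$ at $\xi$ will then promote this to $\xi \succcurlyeq y$.

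Setting $I_0 = [0,1]$, once $I_n = [a_n, b_n]$ has been built, let $m_n = (a_n + b_n)/2$ and apply Lemma \ref{l1} to the strictly convex preference relation induced on $I_n$ by the affine bijection with $[0,1]$. In case (A) this gives $m_n \succcurlyeq x$ for every $x \in [a_n, a_n + (b_n - a_n)/4)$, and I define $I_{n+1} := [a_n + (b_n - a_n)/8,\, b_n]$; case (B) is symmetric. Either way $m_n \in I_{n+1}$ and $|I_{n+1}| = (7/8)|I_n|$, so completeness of $\mathbf{R}$ yields the unique limit $\xi \in \bigcap_n I_n$.

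The inductive step for the invariant is the one place the proof needs care, and I expect it to be the main obstacle. Given $z_n \in I_n$ with $z_n \succcurlyeq y$, naive case analysis on whether $z_n \in I_{n+1}$ is not constructively available; the deliberate overlap -- taking $I_{n+1}$ of length $(7/8)|I_n|$ rather than the natural $(3/4)|I_n|$ -- is precisely what allows cotransitivity of the real ordering, applied to $a_n + (b_n-a_n)/8 < a_n + (b_n-a_n)/4$ and $z_n$, to settle the split: either $z_n > a_n + (b_n-a_n)/8$, in which case $z_n \in I_{n+1}$ and we take $z_{n+1} := z_n$; or $z_n < a_n + (b_n-a_n)/4$, in which case $z_n$ lies in the interval covered by Lemma \ref{l1}, so $m_n \succcurlyeq z_n \succcurlyeq y$, and transitivity of $\succcurlyeq$ allows $z_{n+1} := m_n$.

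Finally, I would show $\xi \succcurlyeq y$ for arbitrary $y$ by contradiction: if $y \succ \xi$, openness of the graph of $\succ$ supplies $\varepsilon > 0$ with $y \succ z$ for every $z \in B(\xi, \varepsilon)$; choosing $N$ with $(7/8)^N < \varepsilon$ places all of $I_N$ inside $B(\xi, \varepsilon)$, since $\xi \in I_N$ and $|I_N| = (7/8)^N$; the invariant then furnishes $z_N \in I_N$ with $z_N \succcurlyeq y$, while simultaneously $y \succ z_N$, the desired contradiction.
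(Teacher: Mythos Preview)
Your proof is correct and follows essentially the same strategy as the paper: iterate Lemma~\ref{l1} to build a nested sequence of shrinking intervals, maintain the invariant that the current interval contains a $\succcurlyeq$-witness for every point of $[0,1]$, and conclude by appealing to the openness of the graph of $\succ$. The one refinement is that you shrink by a factor $7/8$ rather than the paper's $3/4$, deliberately leaving an overlap between the region handled by Lemma~\ref{l1} and the new interval so that cotransitivity of $<$ cleanly drives the constructive case split in the inductive step; the paper cuts exactly the quarter that Lemma~\ref{l1} covers and propagates witnesses via transitivity of $\succcurlyeq$, which is terser but less explicit about this point.
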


\begin{proof}
We inductively construct intervals $[\underline{\xi}_n,\overline{\xi}_n]$ such that, for each $n$,
  \begin{itemize}
   \item[1.] $[\underline{\xi}_{n},\overline{\xi}_{n}]\subset[\underline{\xi}_{n-1},\overline{\xi}_{n-1}]$;
   \item[2.] $\overline{\xi}_{n}-\underline{\xi}_{n}=(4/5)^n$;
   \item[3.] for each $x\in [0,1]\setminus[\underline{\xi}_n,\overline{\xi}_n]$,there exists $y\in[\underline{\xi}_n,\overline{\xi}_n]$ such that $y\succcurlyeq x$.
  \end{itemize}
\noindent
To begin the construction set $\underline{\xi}_0=0$ and $\overline{\xi}_0=1$. At stage $n$, rescaling for $n>1$, we apply Lemma \ref{l1}; if the first case obtains, then we set $\underline{\xi}_n=(3\underline{\xi}_{n-1}+\overline{\xi}_{n-1})/4$ and $\overline{\xi}_n=\overline{\xi}_{n-1}$. In the second case we set $\underline{\xi}_n=\underline{\xi}_{n-1}$ and $\overline{\xi}_0=(\underline{\xi}_{n-1}+3\overline{\xi}_{n-1})/4$. By the transitivity of $\succcurlyeq$, we need only check condition 3. for $[\underline{\xi}_{n-1},\overline{\xi}_{n-1}]\setminus[\underline{\xi}_n,\overline{\xi}_n]$, and by Lemma \ref{l1} $y=(\underline{\xi}_{n-1}+\overline{\xi}_{n-1})/2$ suffices for each such point.

\bigskip
\noindent
Let $\xi$ be the unique intersection of the $[\underline{\xi}_n,\overline{\xi}_n]$. Since $\succcurlyeq$ is continuous, the maximality of $\xi$ follows from 3. 
\end{proof}

\begin{lemma}
\label{l3}
 If $\succ$ is a strictly convex, continuous preference relation on $[a,b]$, where $a\leqslant b$, then there exists $\xi\in[a,b]$ such that $\xi\succcurlyeq x$ for all $x\in [a,b]$.
\end{lemma}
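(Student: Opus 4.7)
My plan is to reduce to Lemma \ref{lb} via the affine rescaling $\phi : [0,1] \to [a,b]$ defined by $\phi(t) = (1-t)a + tb$. First I would introduce the pullback $\succ'$ on $[0,1]$ by $s \succ' t$ iff $\phi(s) \succ \phi(t)$ and check that it is a continuous preference relation, the graph of $\succ'$ being the preimage of the open graph of $\succ$ under the continuous map $\phi \times \phi$. Next I would verify strict convexity of $\succ'$ using the linearity of $\phi$, and then apply Lemma \ref{lb} to obtain a maximal $\xi' \in [0,1]$, setting $\xi = \phi(\xi')$ as the required maximum in $[a,b]$.

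The main obstacle is that the transfer of strict convexity to $\succ'$ requires $\phi$ to be injective, which is exactly the condition $a < b$; constructively, from $a \leqslant b$ we cannot decide $a = b$ versus $a < b$, and in the degenerate case $\succ'$ has distinct $s \neq t \in [0,1]$ with $\phi(s) = \phi(t) = a$, violating strict convexity. I would handle this by constructive approximation. For each $n$, trichotomy of the reals gives either (A) $b - a > 1/2^n$, in which case the rescaling furnishes the (unique, by strict convexity) maximum $\xi^{*}$, or (B) $b - a < 1/2^{n-1}$, in which case I set $\xi_{n} := a$. A short case analysis across pairs of indices shows $\{\xi_n\}$ is Cauchy --- distances are bounded by $1/2^{N-1}$ once both indices exceed $N$ --- so it converges to some $\xi \in [a,b]$.

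To conclude, I would verify maximality of $\xi$ by contradiction. Suppose $x \succ \xi$ for some $x \in [a,b]$. By continuity of $\succ$ there is an open neighborhood $U$ of $\xi$ with $x \succ y$ for every $y \in U$; for sufficiently large $n$, $\xi_n \in U$. In branch (A) this contradicts the maximality of $\xi_n = \xi^{*}$. In branch (B), $\xi_n = a$ with $b - a < 1/2^{n-1}$, so $|x - a| < 1/2^{n-1}$; by choosing $n$ large enough that the open ball in the graph of $\succ$ around $(x,a)$ contains $(a,a)$, we obtain the contradiction $a \succ a$. Modulo this approximation wrinkle, the argument is a routine rescaling; the constructive handling of the possibly degenerate interval is the only real work.
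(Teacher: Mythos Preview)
Your proposal is correct and follows essentially the same route as the paper: both handle the constructively undecidable dichotomy $a<b$ versus $a=b$ by building a Cauchy sequence that equals $a$ when $b-a$ is small and equals the rescaled Lemma~\ref{lb} maximum when $b-a$ is positively bounded below, then verify maximality of the limit via continuity. One minor point: in your branch~(B) the radius of the graph-ball around $(x,a)$ is only available \emph{after} $n$ has been fixed, so it is cleaner to take the two-sided continuity ball around $(x,\xi)$ at the outset---in branch~(B) all of $[a,b]$ then lies within that ball, yielding $x\succ x$ directly.
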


\begin{proof}
Construct an increasing binary sequence $(\lambda_n)_{n\geqslant1}$ such that
 \begin{eqnarray*}
  \lambda_n=0 & \Rightarrow & b-a<1/n;\\
  \lambda_n=1 & \Rightarrow & b-a>1/(n+1).
 \end{eqnarray*}
\noindent
Without loss of generality, we may assume that $\lambda_1=0$. If $\lambda_n=0$, set $x_n=a$ and if $\lambda_n=1-\lambda_{n-1}$, then we apply Lemma \ref{lb}, after some scaling, to construct a $\succ$-maximal element $x$ in $[a,b]$, and set $x_k=x$ for all $k\geqslant n$. Then for $m>n$, $\vert x_n-x_m\vert<2/(n-1)$, so $(x_n)_{n\geqslant1}$ converges to some element $\xi\in[a,b]$. If there exists $x\neq\xi$ such that $x\succ\xi$, then $b-a>0$ and we get a contradiction to Lemma \ref{lb}. The result now follows from continuity.
\end{proof}

\bigskip
\noindent
We use $\pi_i$ to denote the $i$-th projection function, and we write $[x,y]$ for 
 \[
  \{tx+(1-t)y:t\in[0,1]\}.
 \]
\noindent
Here is the \textbf{proof of Theorem \ref{t1}:}

\bigskip
\begin{proof}
We proceed by induction on the dimension $n$ of the space containing $X$. Lemma \ref{l3} is just the case $n=1$. Now suppose we have proved the result for $n$ and consider a strictly convex preference relation $\succ$ on a compact, convex subset $X$ of $\mathbf{R}^n$. Define a preference relation $\succ^\prime$ on $\pi_1(X)=[a,b]$ by
 $$
   s\succ_i t\ \ \ \Leftrightarrow\ \ \ \exists_{x\in X}\forall_{y\in X}\left( \pi_1(x)=s\mbox{ and if } \pi_1(y)=t\mbox{, then } x\succ y\right).
 $$
\noindent
Then $\succ^\prime$ is strictly convex and sequentially continuous: let $s_1,s_2,t\in[a,b]$ with $s_1<t<s_2$. By the induction hypothesis there exist $\xi_1,\xi_2$ such that $\pi_1(\xi_i)=s_i$ and $\xi_i\succcurlyeq x$ for all $x\in X$ with $\pi_1(x)=s_i$ ($i=1,2$). Let $z$ be the unique element of $[\xi_1,\xi_2]$ such that $\pi_1(z)=t$. Then, by the strict convexity of $\succ$, either $z\succ\xi_1$ or $z\succ\xi_2$. In the first case $t\succ^\prime s_1$ and in the second $t\succ^\prime s_2$. Hence $\succ$ is strictly convex. That $\succ^\prime$ is continuous is straightforward.

\bigskip
\noindent
We can now apply Lemma \ref{l3} to construct a maximal element $\xi_1$ of $(\pi_1(X),\succ^\prime)$, and then the induction hypothesis to construct a maximal element of 
 $$
  S=\{x\in X:\pi_1(x)=\xi_1\}
 $$
\noindent
with $\succ|_{S}$. Clearly $\xi=\xi_1\times\xi_2$ is a $\succ$-maximal element of $X$. The uniqueness of maximal elements follows directly from the strict convexity of $\succ$.
\end{proof}

\bigskip
\noindent
We shall have need for the following simple corollary, which is of independent interest.

\begin{corollary}
\label{c1}
Under the conditions of Theorem \ref{t1}, if $x\in X$ and $x\neq\xi$, then $\xi\succ x$.
\end{corollary}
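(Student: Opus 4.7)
The plan is to exploit strict convexity directly at the midpoint of $x$ and $\xi$. Since Theorem \ref{t1} assumes $\succ$ is strictly convex, the consumption set $X$ is convex, so given any $x \in X$ with $x \neq \xi$, the midpoint $m = \tfrac{1}{2}(x + \xi)$ lies in $X$.

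First I would apply the strict convexity clause with $t = 1/2$ to the distinct points $x, \xi \in X$, obtaining the dichotomy
\[
m \succ x \quad \text{or} \quad m \succ \xi.
\]
The second alternative is immediately ruled out by the maximality of $\xi$ given by Theorem \ref{t1}: since $\xi \succcurlyeq m$ means precisely $\neg(m \succ \xi)$, the right disjunct is contradictory. Hence $m \succ x$.

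Finally I would combine $\xi \succcurlyeq m$ with $m \succ x$ using the transitivity property noted in the preliminaries (if $x \succcurlyeq y \succ z$, then $x \succ z$), yielding $\xi \succ x$, as required. There is no real obstacle here; the only subtlety is the implicit use of convexity of $X$ to place $m$ inside $X$, and the reliance on the fact that $\succcurlyeq$-maximality is enough to exclude the unwanted branch of the strict convexity dichotomy. The upgrade from $\succcurlyeq$ to $\succ$ is thus essentially free once strict convexity is invoked one more time, which is precisely what makes this a simple corollary rather than a result requiring new ideas.
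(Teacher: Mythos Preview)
Your argument is correct and matches the paper's proof essentially line for line: form the midpoint, apply strict convexity to get the dichotomy, use $\xi\succcurlyeq m$ to eliminate $m\succ\xi$, and then chain $\xi\succcurlyeq m\succ x$ to conclude. The only difference is notation ($m$ versus the paper's $y$) and that you spell out the implicit steps more fully.
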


\begin{proof}
 Let $y=(x+\xi)/2$. Then either $y\succ x$ or $y\succ\xi$. Since $\xi\succcurlyeq y$ the former must attain, so $\xi\succcurlyeq y\succ x$.
\end{proof}

\bigskip
\noindent
If we are not interested in uniqueness of maxima, then we might suppose that $\succ$ only satisfies the weaker condition of being \emph{convex}: for all $x,y\in X$ and each $t\in[0,1]$, either $(x+y)/2\succcurlyeq x$ or $(x+y)/2\succcurlyeq y$. We give a Browuerian counterexample\footnote{A Brouwerian counterexample is a weak counterexample: it is not an example contradicting a proposition, but an example showing a proposition to imply a principle which is unacceptable in constructive mathematics. Generally these can be considered as unprovability results.} to show that this condition is not strong enough to allow the construction of a maximal point. Let $x\in(-1/4,1/4)$ and let $f:[0,1]\rightarrow\mathbf{R}$ be the function given by
 $$
  f(t)=\left\{\begin{array}{lll}
                \mathrm{sign}(x)(t-x\vee0) & & t\in[0,x\vee0]\\
                0 & & t\in[x\vee0,1-x\vee0]\\
                -\mathrm{sign}(x)(t-x\vee0) & & t\in[1-x\vee0,1],
               \end{array}\right.
 $$
\noindent
where\footnote{This is just convenient notation: formally $\mathrm{sign}$ is not a constructively well defined function, but the function $f$ does exist constructively.}
 $$
  \mathrm{sign}(x)=\left\{\begin{array}{lll}
                -1 & & x<0\\
                0 & & x=0\\
                1 & & x>0.
               \end{array}\right.
 $$
\noindent
Define a preference realtion $\succ$ on $[0,1]$ by 
 $$
  t\succ s \Leftrightarrow f(t)>f(s).
 $$
\noindent
It is easy to see that $\succ$ is continuous and convex. Further, if $x>0$, then $0$ is the unique maximal element, and if $x<0$, then $1$ is the unique maximal element. Now suppose that we can construct $\xi\in[0,1]$ such that $\xi\succcurlyeq t$ for all $t\in[0,1]$; either $\xi>0$ or $\xi<1$. In the first case we have $\neg(x>0)$ and in the second $\neg(x<0)$, so the statement
 \begin{quote}
  `Every continuous, convex preference relation on $[0,1]$ has a maximal element'
 \end{quote}
\noindent
implies $\forall_{x\in\mathbf{R}}(x\leqslant0\vee x\geqslant0)$, which is equivalent 
to the constructively unacceptable lesser limite principle of omniscience \cite{BR}.

\section*{Continuous demand functions}

\noindent
We now consider a consumer whose consumption set $X$ is a closed convex subset of $\mathbf{R}^n$
ordered by a strictly convex preference relation $\succ$, and who has an initial
endowment $w\in\mathbf{R}$. For a given price vector $p\in\mathbf{R}$, a consumers \emph{budget set}
 $$
  \beta(p,w)=\{x\in X:p.x\leqslant w\}
 $$
\noindent
is the collection of commodity bundles the consumer can afford. The collection of maximal elements of $\beta(p,w)$ form the consumers \emph{demand set} for price $p$ and initial endowment $w$.

\begin{lemma}
\label{dsb}
 If $p>0$ and there exists $x\in X$ such that $p\cdot x\leqslant w$, then $\beta(p,w)$ is compact and convex.
\end{lemma}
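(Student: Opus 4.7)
The convexity of $\beta(p,w)$ is immediate: linearity of $x\mapsto p\cdot x$ together with convexity of $X$ gives $tx+(1-t)y\in\beta(p,w)$ for $x,y\in\beta(p,w)$ and $t\in[0,1]$. The substance of the lemma lies in the compactness claim, which in Bishop-style constructive mathematics decomposes into completeness plus total boundedness.

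Boundedness comes first and is the straightforward ingredient: combining the hypothesis $p>0$ (componentwise) with the standard economic convention that consumption sets sit in the nonnegative orthant, $X\subseteq\mathbf{R}^n_+$, one gets $0\leqslant p_ix_i\leqslant p\cdot x\leqslant w$ for every $x\in\beta(p,w)$, so $x_i\leqslant w/p_i$; hence $\beta(p,w)\subseteq B:=\prod_{i=1}^n[0,w/p_i]$, a compact box. Completeness is then almost automatic, since $\beta(p,w)$ is the intersection of the closed set $X$ with the closed half-space $\{x:p\cdot x\leqslant w\}$, and a Cauchy sequence in $\beta(p,w)$ converges in $\mathbf{R}^n$ to a limit that lies in both.

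The main step is total boundedness. My plan is, given $\varepsilon>0$, to produce a finite $\varepsilon$-approximation in three stages: (i) take a finite $\varepsilon/3$-net of the compact box $B$; (ii) project each net point $g$ to the nearest point $g'\in X$, using that inhabited closed convex subsets of $\mathbf{R}^n$ admit nearest-point projections; and (iii) if $p\cdot g'>w$, slide $g'$ along the segment toward the hypothesised witness $x_0$ until the budget inequality is satisfied. Convexity of $X$ ensures the slide stays inside $X$, linearity of the budget constraint gives an explicit slide parameter, and any $z\in\beta(p,w)$ lies near some point of the resulting finite set because projection onto $X$ does not increase distance to $z$ and the slide is quantitatively controlled.

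The principal obstacle is constructively controlling the slide in stage (iii): the estimate requires a positive margin $w-p\cdot x_0\geqslant\eta>0$, whereas the hypothesis supplies only the weak inequality $p\cdot x_0\leqslant w$. I would address this by a case split: if one can find $x_0'\in X$ with $p\cdot x_0'<w$ strictly, the margin yields a uniform slide estimate; and if no such $x_0'$ exists even approximately, then $\beta(p,w)$ is concentrated in the hyperplane $\{x:p\cdot x=w\}$, whose intersection with $X$ is a closed convex bounded subset of a lower-dimensional affine space and succumbs to an essentially induction-on-dimension argument.
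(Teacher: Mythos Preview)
Your treatment of convexity matches the paper's: it simply says ``Convexity is clear.'' For compactness the paper gives no argument at all and defers to Bridges' earlier paper \cite{dsb_ctsdem}, so there is no in-text proof to compare against; your sketch is already far more detailed than what appears here, and your non-degenerate branch (take a fine net, project onto $X$ using non-expansiveness of the convex projection, then slide toward a strict-interior witness with the displacement controlled by the margin) is the standard and correct idea.

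The gap is in your resolution of the boundary case. The dichotomy ``either one can find $x_0'\in X$ with $p\cdot x_0'<w$, or no such $x_0'$ exists even approximately'' is not constructively decidable as stated; it has to be replaced by an $\varepsilon$-dependent cotransitivity split (e.g.\ compute $\inf\{p\cdot x:x\in K\}$ over a suitable totally bounded $K$ and branch on whether this is $<w$ or $>w-\delta$ for a $\delta$ tied to the target $\varepsilon$). More seriously, your degenerate branch does not actually set up an induction: passing to $X\cap\{x:p\cdot x=w\}$ lands you on an \emph{equality}-constrained slice, and showing that this slice is located and totally bounded is itself the crux---it is not an instance of the same half-space lemma in one fewer dimension, so ``induction on dimension'' does not apply without further work. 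These are exactly the delicate points the cited Bridges argument is designed to handle. A minor aside: your boundedness step invokes $X\subseteq\mathbf{R}^n_+$, which the paper never states; in the introduction $X$ is taken compact, which would give boundedness of $\beta(p,w)$ directly and make that assumption unnecessary.
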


\begin{proof}
Convexity is clear. See \cite{dsb_ctsdem} for a proof that $\beta(p,w)$ is compact.
\end{proof}

\bigskip
\noindent
We use $\partial S$ to denote the boundary of a subset $S$ of some metric space.

\begin{lemma}
The boundary of $\beta(p,w)$ is compact.
\end{lemma}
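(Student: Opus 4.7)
The plan is to exhibit $\partial\beta(p,w)$ as the continuous image of the compact sphere $S^{n-1}$ under a radial parameterisation. Under the hypotheses of Lemma~\ref{dsb} the set $\beta(p,w)$ is compact and convex; I would additionally fix an interior point $x_0$ and a radius $r > 0$ with $B(x_0,r) \subset \beta(p,w)$, which the economic setup should provide under mild non-degeneracy. For each unit vector $v \in S^{n-1}$ define
\[
  \sigma(v) = \sup\{\, t \geq 0 : x_0 + tv \in \beta(p,w) \,\},
\]
which is attained and bounded below by $r$ since $\beta(p,w)$ is compact and contains $B(x_0,r)$. Setting $\phi(v) = x_0 + \sigma(v)v$, each $\phi(v)$ lies on $\partial\beta(p,w)$ because shifts slightly past $\sigma(v)$ along $v$ leave $\beta(p,w)$.

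The main technical step is to show that $\sigma$, and hence $\phi$, is uniformly continuous on $S^{n-1}$. Convexity of $\beta(p,w)$ together with the uniform interiority $B(x_0,r) \subset \beta(p,w)$ yields a standard cone estimate of the form $|\sigma(v) - \sigma(v')| \leq C\|v - v'\|$ for a constant $C$ depending only on $r$ and a diameter bound for $\beta(p,w)$: any ray $x_0 + tv'$ with $v'$ near $v$ must exit $\beta(p,w)$ near parameter $\sigma(v)$, since otherwise a convex combination with a point of $B(x_0,r)$ furnishes a point of $\beta(p,w)$ beyond $\phi(v')$ on the original ray, contradicting the definition of $\sigma(v')$. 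Once $\phi$ is continuous, every boundary point $y \neq x_0$ equals $\phi\bigl((y-x_0)/\|y-x_0\|\bigr)$, so $\partial\beta(p,w) = \phi(S^{n-1})$ is compact as the continuous image of a compact set.

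The principal obstacle constructively is locating the interior point $x_0$: without the law of excluded middle one cannot split into cases according to whether $\beta(p,w)$ has nonempty interior. The statement therefore presumably relies, implicitly, on an additional slackness hypothesis (for instance, the existence of some $x^* \in X^{\circ}$ with $p \cdot x^* < w$) from which $x_0$ and $r$ can be extracted. A secondary subtlety is that part of $\partial\beta(p,w)$ is inherited from $\partial X$, which need not be smooth; since $\beta(p,w)$ itself is convex, however, the cone estimate remains valid at such corner points, and the argument still goes through.
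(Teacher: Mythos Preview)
Your approach via radial parameterisation from an interior point is genuinely different from the paper's. The paper argues through locatedness: it observes that once the metric complement $-\beta(p,w)$ is located, the distance to the boundary is computable as $\rho(x,\partial\beta(p,w)) = \max\{\rho(x,\beta(p,w)),\rho(x,-\beta(p,w))\}$, so $\partial\beta(p,w)$ is itself located; being also closed inside the compact set $\beta(p,w)$, it is then totally bounded and complete, hence compact. The locatedness of $-\beta(p,w)$ is asserted to follow by an argument parallel to that for Lemma~\ref{dsb}.

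The chief advantage of the paper's route is precisely that it sidesteps the obstacle you flag as principal: it does not require producing an interior point of $\beta(p,w)$, so no additional slackness hypothesis is needed. Your parameterisation, by contrast, delivers something stronger when an interior point \emph{is} available---an explicit uniformly continuous surjection $S^{n-1}\to\partial\beta(p,w)$---but cannot handle the degenerate situations (where $\beta(p,w)$ may collapse to a lower-dimensional set) that the locatedness argument treats uniformly. There is also a secondary constructive wrinkle in your sketch: the supremum defining $\sigma(v)$ must itself be shown to exist, which amounts to proving that $\{t\geqslant 0: x_0+tv\in\beta(p,w)\}$ is totally bounded; this is not automatic from compactness of $\beta(p,w)$ alone, though it can be extracted from the locatedness of $X$ together with the explicit linear form of the half-space constraint $p\cdot x\leqslant w$.
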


\begin{proof}
If $X$ is colocated, then $\rho(x,\partial X)=\max\{\rho(x,X),\rho(x,-X)\}$ and hence the boundary of $X$ is located. Therefore it suffices to show that $-\beta(p,w)$ is located. This is similar to the proof of Lemma \ref{dsb}.
\end{proof}

\bigskip
\noindent
It now follows from Theorem \ref{t1} that the function $F$, the consumers \emph{demand function}, that maps $(p,w)$, where $p$ is a price vector and $w$ an initial endowment, to the unique maximal element of $\beta(p,w)$, is well defined. By logical considerations we have that any function which can be proven to exist within Bishop's constructive mathematics alone is classically continuous, so the consumers demand function is continuous in the classical setting.

\bigskip
\noindent
We seek conditions under which $F$ is constructively continuous. A function on a locally compact space is said to be \emph{Bishop continuous} if it is pointwise continuous, and is further uniformly continuous on every compact space. Since the uniform continuity theorem---every continuous function on a compact space is uniformly continuous---is not provable in Bishop's constructive mathematics, this is the natural notion of continuity for us to consider. We study the continuity of $F$ by looking at the map $\Gamma$, on the set $T$ of all inhabited $\beta(p,w)$, taking $\beta(p,w)$ to $F(p,w)$. We give $T$ the Hausdorff metric: for located subsets $A,B$ of a metric space $Y$
 $$
  \rho_H(A,B)=\max\left\{\sup\{\rho(a,B):a\in A\},\sup\{\rho(b,A):b\in B\}\right\}.
 $$
\noindent
Our next lemma shows how studying $\Gamma$ allows us to show the continuity of $F$.

\begin{lemma}
\label{lGamma}
 If $\Gamma$ is continuous, then $F$ is continuous. If $\Gamma$ is uniformly continuous, then for each $p\in\mathbf{R}^n$, $w\mapsto F(p,w)$ is uniformly continuous, and for each $w\in\mathbf{R}$, $p\mapsto F(p,w)$ is Bishop continuous.
\end{lemma}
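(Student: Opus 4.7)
The plan is to exhibit $F$ as a composition $F=\Gamma\circ B$, where $B$ is the \emph{budget correspondence} sending $(p,w)$ to $\beta(p,w)\in(T,\rho_H)$. Both assertions then reduce to establishing appropriate continuity properties of $B$ and composing with those of $\Gamma$.

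First I would establish a constructive Lipschitz-type bound for $B$: given $(p_0,w_0)$ in the parameter domain, choose a witness $x_0\in\beta(p_0,w_0)$ and, using continuity of the inner product together with the fact that $\beta(p_0,w_0)$ is inhabited, arrange that $p\cdot x_0$ lies uniformly below $w$, with a quantified slack $\eta>0$, for all $(p,w)$ in a neighbourhood of $(p_0,w_0)$. Given any $x\in\beta(p,w)$ and any nearby $(p',w')$, if $p'\cdot x$ exceeds $w'$ we slide $x$ along the segment $[x,x_0]$ just far enough to restore feasibility. Convexity of $X$ keeps the slid point inside $X$, and the required displacement is controlled linearly by $|w-w'|+\Vert p-p'\Vert$, with a constant depending on $\mathrm{diam}(X)$ and on $\eta$. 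A symmetric argument exchanges the roles of the two parameter pairs to give a Lipschitz bound on $\rho_H\bigl(\beta(p,w),\beta(p',w')\bigr)$.

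Pointwise continuity of $B$ combines with pointwise continuity of $\Gamma$ to give pointwise continuity of $F=\Gamma\circ B$, which is the first assertion. For the second, I would refine the bound in each variable. When $p$ is held fixed, only $w$-perturbations enter, and since $X$ is bounded the relevant range of $w$ is bounded and the slack at $x_0$ can be chosen uniformly; so $w\mapsto B(p,w)$ is uniformly continuous into $(T,\rho_H)$, and composing with the uniformly continuous $\Gamma$ yields uniform continuity of $w\mapsto F(p,w)$. When $w$ is held fixed, the analogous bound for $p$-perturbations involves constants that depend on $\Vert p\Vert$ and on the slack at $x_0$, both of which can be controlled uniformly on any compact subset of the $p$-domain bounded away from $0$; so $p\mapsto B(p,w)$ is uniformly continuous on every such compact set, hence Bishop continuous, and composition with uniformly continuous $\Gamma$ preserves this to give Bishop continuity of $p\mapsto F(p,w)$.

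The main obstacle is the constructive production of the auxiliary witness $x_0$ together with its quantified slack: classically one would take an argmin of $p\cdot x$ over $X$, but constructively we must extract $x_0$ and $\eta>0$ from the assumption that $\beta(p_0,w_0)$ is inhabited, and then propagate this uniformly to nearby $(p,w)$. Once that is done, the Hausdorff estimate is a short calculation using convexity of $X$.
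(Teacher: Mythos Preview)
The paper states this lemma without proof, so there is no ``paper's own proof'' to compare against; your factorisation $F=\Gamma\circ B$, with $B:(p,w)\mapsto\beta(p,w)$ viewed as a map into $(T,\rho_H)$, is exactly the approach one would expect, and it cleanly handles the first (pointwise) assertion.

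There is, however, a genuine gap in your argument for the uniform continuity of $w\mapsto F(p,w)$. You write that ``since $X$ is bounded the relevant range of $w$ is bounded and the slack at $x_0$ can be chosen uniformly''. Boundedness of the $w$-range does not give a uniform positive slack: for any choice of anchor $x_0\in X$, the slack $w-p\cdot x_0$ is at most $w-\inf_{x\in X}p\cdot x$, and this tends to $0$ as $w$ approaches the infimum. Your sliding estimate produces a displacement bounded by $(w'-w)\,\mathrm{diam}(X)/\eta$, so the Lipschitz constant blows up precisely where the budget set degenerates. Concretely, take $X$ the closed unit disk in $\mathbf{R}^2$ and $p=(1,0)$: one computes $\rho_H\bigl(\beta(p,w),\beta(p,-1)\bigr)=\sqrt{2(1+w)}$, which is uniformly continuous but not Lipschitz, and no fixed-slack sliding argument can recover this modulus.

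To repair the argument you need either an additional hypothesis on $X$ (uniform rotundity of $X$, which is in force in Corollary~\ref{DF2} where the lemma is actually applied, gives a quantitative convexity that controls the degeneration), or a different geometric estimate that does not route through a single anchor with fixed slack. The same caution applies to your Bishop-continuity claim for $p\mapsto F(p,w)$: on a compact set of prices the budget set can still collapse (as $p$ varies the minimum of $p\cdot x$ over $X$ may approach $w$), so ``bounded away from $0$'' is not by itself enough to guarantee a uniform slack there either.
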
 

\noindent
In constructive mathematics, the uniform continuity theorem---every pointwise continuous function with compact domain is uniformly continuous---is closely related to the `semi-constructive' fan theorem isolated by Brouwer. In the appendix we introduce Brouwer's fan theorem (\textbf{FT}) and the notion of a weakly (uniformly) continuous predicate, and we give a version of the uniform continuity theorem for these predicates. Our next result says that adopting Brouwer's fan theorem is sufficient to prove the classical result that $F$ is continuous when $\succ$ is continuous and strictly convex. We observe that if $\beta(p,w)$ is inhabited and every component of $p$ is positive, then 
 $$
  \beta(p,w)=\left\{x\in\mathbf{R}^n:\sum_{i=1}^n p_i x_i\leqslant w\right\}
 $$
\noindent
is a diamond, and if the \emph{diameter}
 $$
  \sup\{\rho(x,y):x,y\in\beta(p,w)\}
 $$
\noindent
of $\beta(p,w)$ is positive, then $\beta(p,w)$ has inhabited interior.

\begin{theorem}
\label{DF1}
 Suppose Brouwer's full fan theorem holds. If $\succ$ is continuous and strictly convex, then $F$ is Bishop continuous.
\end{theorem}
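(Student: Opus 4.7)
The plan is to establish pointwise continuity of the map $\Gamma : T \to X$, transfer this to pointwise continuity of $F$ via Lemma \ref{lGamma}, and then invoke Brouwer's full fan theorem to lift pointwise continuity to uniform continuity on every compact subset of $T$, yielding Bishop continuity of $F$.

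For pointwise continuity of $\Gamma$ at $\beta_0 \in T$ with $\xi_0 = \Gamma(\beta_0)$, I would fix $\varepsilon>0$ and seek $\delta>0$ such that $\rho_H(\beta_0,\beta_1)<\delta$ forces $\|\xi_0-\Gamma(\beta_1)\|<\varepsilon$ for all $\beta_1\in T$. The key tool is a midpoint argument. Writing $\xi_1 = \Gamma(\beta_1)$, one picks $y \in \beta_0$ within $\delta$ of $\xi_1$ (available from the Hausdorff distance) and forms $m=(\xi_0+y)/2\in\beta_0$, which lies in $\beta_0$ by convexity. Strict convexity then gives $m\succ \xi_0$ or $m \succ y$; the first alternative is ruled out because $\xi_0$ is $\succcurlyeq$-maximal in $\beta_0$, so $m \succ y$. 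Openness of the graph of $\succ$ promotes this to $m \succ \xi_1$ provided $y$ is close enough to $\xi_1$, and then to $m'\succ\xi_1$ for some $m'\in\beta_1$ sufficiently close to $m$ (obtained from the Hausdorff condition again), contradicting maximality of $\xi_1$ in $\beta_1$. Corollary \ref{c1} underwrites the quantitative version of this step by ensuring $\xi_0 \succ y$ strictly whenever $y$ is bounded away from $\xi_0$. To convert this into an explicit $\delta$, one needs a modulus of openness for $\succ$ that is uniform as $y$ ranges over the compact locus $\{y\in\beta_0 : \|y-\xi_0\|\geqslant\varepsilon/2\}$, which is supplied by the fan theorem via the appendix's framework.

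Once $F$ is pointwise continuous, the proof finishes by applying the uniform continuity theorem for weakly continuous predicates (proved in the appendix from the full fan theorem) to the predicate on $T$ asserting the existence of a local modulus of continuity for $F$ at a given $(p,w)$ with respect to a prescribed $\varepsilon$. This produces a modulus that is uniform on any compact subset of $T$, so $F$ is Bishop continuous.

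The main obstacle will be the quantitative bookkeeping in the midpoint argument: verifying that the modulus one extracts is visibly a weakly continuous predicate in the sense of the appendix, and arranging the uniform neighborhood statements for the graph of $\succ$ on compact subsets of $\beta_0$ so that the fan-theoretic uniformisation is directly applicable. The remaining ingredients---strict convexity, openness of the graph of $\succ$, Corollary \ref{c1}, and Lemma \ref{lGamma}---are then assembled in a straightforward way.
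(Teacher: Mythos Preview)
Your proposal is correct and follows essentially the same route as the paper: establish pointwise continuity of $\Gamma$ by combining Corollary~\ref{c1} with the openness of $\succ$ and uniformising over the compact set of points bounded away from $\xi_0$ via the fan-theoretic continuous-predicate machinery of the appendix, then pass to $F$ through Lemma~\ref{lGamma} and the fan-theoretic uniform continuity theorem. The midpoint $m=(\xi_0+y)/2$ is a harmless detour---the paper works directly with $\xi_0\succ y$ from Corollary~\ref{c1} and the predicate $P(x,\varepsilon,\delta)\Leftrightarrow\forall_{z\in B(\varphi(x),\delta)}\,\xi\succ z$---and you should note that Theorem~\ref{UCTPred} is stated only for $[0,1]^n$, so the paper first parametrises the relevant compact piece of $\partial\beta(p,w)$ by $[0,1]^n$ via a nonexpansive bijection $\varphi$ before invoking it.
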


\begin{proof}
Since \textbf{FT} implies that every continuous function on a compact space is uniformly continuous, it suffices, by Lemma \ref{lGamma}, to show that $\Gamma$ is continuous. Fix $\varepsilon>0$, and $(p,w)\in\mathbf{R}^{n+1}$ such that $\beta(p,w)$ is inhabited; we write $S=\beta(p,w)$ and $\xi=F(p,w)$. 

\bigskip
\noindent
Either $\rho(\xi,\partial S)>0$ or $\rho(\xi,\partial S)<\varepsilon/2$. In the first case $\xi$ is maximal on the entire set of consumer bundles, so it suffices to set $\delta=\varepsilon$. In the second case, let $\varphi$ be the natural bijection of $[0,1]^n$ with $T\equiv\partial\beta(p,w)\setminus B_{\rho_1}(x,\varepsilon/2)$; without loss of generality, $\varphi$ is nonexpansive. We define a predicate on $[0,1]$ by
 $$
  P(x,\alpha,\delta)\ \ \Leftrightarrow\ \ \forall_{y\in B(\varphi(x),\delta)}\xi\succ y.
 $$
\noindent
Then $P$ is a weakly continuous predicate: condition (i) follows from Corollary \ref{c1} and the lower pointwise continuity of $\succ$; condition (ii) follows from elementary geometry, given that $\varphi$ is nonexpansive. By Theorem \ref{UCTPred}, $P$ is weakly uniformly continuous and hence there exists $\delta>0$ such that every $y\in B(x,\delta)$ is strictly less preferable than $\xi$ for all $x\in T$. If $\rho(x,S)<\min\{\delta,\varepsilon\}/2$, then $\rho(x,T)<\delta$, $x\in S$, or $x\in B(\xi,\varepsilon)$. In the first two cases $\xi\succ x$; it follows that $F(p^\prime,w^\prime)\in B(\xi,\varepsilon)$ whenever $\rho_H(\beta(p,w),\beta(p^\prime,w^\prime)<\min\{\delta,\varepsilon\}/2$.
\end{proof}

\bigskip
\noindent
It may seem a little odd that we choose to work in Bishop's constructive mathematics because we are interested in producing results with computational meaning, but that we then add an extra principle \textbf{FT} to our framework. In particular, the inconsistency of Brouwer's fan theorem with recursive analysis \cite{BR} may cause some consternation. The constructive nature of the fan theorem can be intuitively justified as follows: in order to assert that $B$ is a bar we must have a proof that $B$ is a bar, and a proof is a finite object; therefore an examination of the finite information used in the proof that $B$ is a bar should reveal the uniform bound that the fan theorem gives us. Although this argument does not hold up in Bishop's constructive mathematics, if your objects are presented in the right way (and indeed a very nature way from a computational point of view), then the fan theorem can be proved \cite{NP,PT}.

\bigskip
\noindent
We pause here to give a consequence of Theorem \ref{DF1}. Consider a system with $N$ commodities, $n$ producers, and $m$ consumers. To each producer we associate a production set $Y_i\subset\mathbf{R}^N$; and to each consumer a consumption set $X_i\subset\mathbf{R}^N$ endowed with a preference relation $\succ_i$. Further we assume that each consumer has no initial endowment. A \emph{competitive equilibrium} of an economy consists of a price vector $\mathbf{p}\in\mathbf{R}^N$, points $\mathbf{\xi}_1,\ldots,\mathbf{\xi}_i\in\mathbf{R}^N$, and a vector $\mathbf{\eta}$ in the \emph{aggregate production set}
 $$
  Y=Y_1+\cdots+Y_n,
 $$
\noindent
satisfying
 \begin{itemize}
  \item[\textbf{E1}] $\mathbf{\xi}_i\in D_i(\mathbf{p})$ for each $1\leqslant i\leqslant m$.
  \item[\textbf{E2}] $\mathbf{p}\cdot\mathbf{y}\leqslant\mathbf{p}\cdot\mathbf{\eta}=0$ for all $\textbf{y}\in Y$.
  \item[\textbf{E3}] $\sum_{i=1}^m\mathbf{\xi}_i=\mathbf{\eta}$.
 \end{itemize}
\noindent
An economy is said to have \emph{approximate competitive equilibria} if for all $\varepsilon>0$ there exist a price vector $\mathbf{p}\in\mathbf{R}^N$, points $\mathbf{\xi}_1,\ldots,\mathbf{\xi}_i\in\mathbf{R}^N$, and a vector $\mathbf{\eta}$ satisfying \textbf{E1},\textbf{E3}, and 
 \begin{itemize}
  \item[\textbf{AE}] $\mathbf{p}\cdot\mathbf{\eta}>-\varepsilon$.
 \end{itemize}

\bigskip 
\noindent
The work in \cite{HM} together with Theorem \ref{DF1} gives the next result, which is an approximate version of McKenzie's theorem on the existence of competitive equilibria \cite{M3}. 

\begin{theorem}
 \label{cMcKenzie}
 Assume that Brouwer's fan theorem holds. Suppose that 
  \begin{itemize}
   \item[(i)] each $X_i$ is compact and convex;
   \item[(ii)] each $\succ_i$ is continuous and strictly convex;
   \item[(iii)] $\left(X_i\cap Y\right)^\circ$ is inhabited for each $i$;
   \item[(iv)] $Y$ is a located closed convex cone;
   \item[(v)] $Y\cap\left\{\left(x_1,\ldots,x_N\right):x_i\geqslant0\mbox{ for each }i\right\}=\{0\}$; and
   \item[(vi)] for each $\mathbf{p}\in\mathbf{R}^N$ and each $i$, if $\sum_{i=1}^m F_i(\mathbf{p})\in Y$, then there exists $\textbf{x}_i\in X_i$ such that $\mathbf{x}_i\succ_i F_i(\mathbf{p})$.
  \end{itemize}
 \noindent
 Then there are approximate competitive equilibria.
\end{theorem}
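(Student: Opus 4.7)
The plan is to reduce the statement to the approximate equilibrium theorem of \cite{HM}, whose principal analytic hypothesis is the Bishop continuity of each consumer's individual demand function; Theorem \ref{DF1} is exactly what supplies this. Nothing genuinely new in constructive content should be needed beyond what has already been developed in this paper and in \cite{HM}.

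First I would check that each demand function $F_i$ is well-defined and Bishop continuous on the relevant domain of price vectors. For each consumer $i$, (i) makes $X_i$ compact and convex, and (ii) makes $\succ_i$ continuous and strictly convex, so Theorem \ref{t1} gives a unique $\succ_i$-maximum of the budget set $\beta_i(\mathbf{p},0)$ whenever it is inhabited; (iii) together with the fact that $\mathbf{0}\in Y$ (since $Y$ is a cone by (iv)) ensures inhabitedness for the prices we consider. The assumed fan theorem then makes Theorem \ref{DF1} applicable, yielding Bishop continuity of each $F_i$, and hence of the aggregate demand $\mathbf{p}\mapsto\sum_{i=1}^m F_i(\mathbf{p})$ on any compact set of prices.

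Next I would normalise: (iv) gives $Y$ the structure of a closed convex cone, and (v) forces any supporting price vector to have a strictly positive component, so there is no loss of generality in restricting prices to a simplex in $\mathbf{R}^N$. Given $\varepsilon>0$, I would then invoke the approximate excess-demand / fixed-point argument from \cite{HM}: uniform continuity of the aggregate demand on this simplex (from Brouwer's fan theorem applied to the Bishop continuity above) is the input its constructive approximate fixed-point procedure needs in order to produce a price $\mathbf{p}$ and a vector $\mathbf{\eta}\in Y$ with $\sum_{i=1}^m F_i(\mathbf{p})=\mathbf{\eta}$, $\mathbf{p}\cdot\mathbf{y}\leqslant\mathbf{p}\cdot\mathbf{\eta}$ for all $\mathbf{y}\in Y$, and $\mathbf{p}\cdot\mathbf{\eta}>-\varepsilon$. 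Setting $\mathbf{\xi}_i=F_i(\mathbf{p})$ then delivers \textbf{E1} and \textbf{E3} tautologically and \textbf{AE} as the final inequality; condition (vi) is the non-satiation input that rules out the degenerate case where every consumer is satiated at the aggregate demand and no meaningful price can emerge.

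The main obstacle I foresee is not any fresh piece of constructive mathematics---Theorem \ref{DF1} already discharges that---but the careful bookkeeping required to match the hypothesis list of \cite{HM} against (i)--(vi), and the handling of the domain of each $F_i$ near prices where $\beta_i(\mathbf{p},0)$ degenerates, in a way compatible with the approximate fixed-point construction.
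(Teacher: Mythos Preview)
Your approach is correct and matches the paper's own treatment: the paper does not give a standalone proof of this theorem but simply states that it follows from the work in \cite{HM} together with Theorem~\ref{DF1}, which is precisely the reduction you outline. Your additional remarks about checking well-definedness via Theorem~\ref{t1}, normalising prices, and the role of hypotheses (iii)--(vi) are reasonable elaborations of the bookkeeping the paper leaves implicit.
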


\subsection*{Uniformly rotund preference relations}

\noindent
In order to prove Theorem \ref{DF1} we effectively strengthened our theory, and therefore weakened our notion of computable. The other natural approach toward proving the existence of a Bishop continuous demand function is to strengthen the conditions on $\succ$. We follow the lead of Bridges in \cite{dsb_ctsdem} and focus on uniformly rotund preference relations.

\bigskip
\noindent
Hereafter, we extend the domain of $\Gamma$ to all inhabited, compact, convex subsets of $X$. Theorem \ref{t1} still ensures that $\Gamma$ is well defined.

\begin{theorem}
\label{lGammacts}
If $\succ$ is a uniformly rotund preference relation, then $\Gamma$ is uniformly continuous.
\end{theorem}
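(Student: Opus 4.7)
The plan is to show uniform continuity of $\Gamma$ directly from the definition of uniform rotundity, together with a clever exploitation of the ``either-or'' disjunction that rotundity provides. Given $\varepsilon>0$, I would apply the uniform rotundity of $\succ$ at the level $\varepsilon/2$ to obtain some $\delta_1>0$, and then take $\delta:=\delta_1/2$ (shrinking if necessary). Suppose $K_1,K_2$ are inhabited compact convex subsets of $X$ with $\rho_H(K_1,K_2)<\delta$, and set $\xi_i:=\Gamma(K_i)$. By a constructive dichotomy, either $\Vert\xi_1-\xi_2\Vert<\varepsilon$ (and we are done) or $\Vert\xi_1-\xi_2\Vert>\varepsilon/2$; the main work is to rule out the latter.

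Assume $\Vert\xi_1-\xi_2\Vert>\varepsilon/2$. Using locatedness, choose $\xi_2'\in K_1$ with $\Vert\xi_2-\xi_2'\Vert<\delta$ and $\xi_1'\in K_2$ with $\Vert\xi_1-\xi_1'\Vert<\delta$. Set $m:=(\xi_1+\xi_2)/2$, $m_1:=(\xi_1+\xi_2')/2\in K_1$, and $m_2:=(\xi_1'+\xi_2)/2\in K_2$; by convexity each $m_i$ lies in the corresponding $K_i$, and both satisfy $\Vert m_i-m\Vert<\delta/2<\delta_1$. Uniform rotundity of $\succ$, applied to $\xi_1,\xi_2$ at level $\varepsilon/2$, then yields both of the disjunctions
\[
 m_1\succ\xi_1\ \ \text{or}\ \ m_1\succ\xi_2,\qquad m_2\succ\xi_1\ \ \text{or}\ \ m_2\succ\xi_2.
\]

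The key observation, and what I expect to be the subtle part, is that only half of each disjunction is directly ruled out by maximality: since $m_1\in K_1$ we have $\xi_1\succcurlyeq m_1$, killing the first clause of the first disjunction; and since $m_2\in K_2$ we have $\xi_2\succcurlyeq m_2$, killing the second clause of the second. This forces $m_1\succ\xi_2$ and $m_2\succ\xi_1$ simultaneously. Chaining these with the facts $\xi_2\succcurlyeq m_2$ and $\xi_1\succcurlyeq m_1$, and invoking the mixed transitivity rule (if $x\succ y\succcurlyeq z$ then $x\succ z$) recorded in the preliminaries, we obtain $m_1\succ m_2$ and $m_2\succ m_1$, hence $m_1\succ m_1$ by transitivity of $\succ$, which is impossible. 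This contradiction rules out $\Vert\xi_1-\xi_2\Vert>\varepsilon/2$, so we conclude $\Vert\Gamma(K_1)-\Gamma(K_2)\Vert<\varepsilon$ as required.

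The main obstacle is not computational but conceptual: the straightforward attempt (comparing $\Gamma(K_1)$ to a single ``nearby'' midpoint in $K_1$) gets stuck on the ``wrong'' side of the rotundity disjunction; the fix is to set up symmetric midpoints $m_1\in K_1$ and $m_2\in K_2$ and exploit both disjunctions at once to manufacture a preference cycle.
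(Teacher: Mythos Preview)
Your argument is correct and is essentially the same as the paper's: both produce, via convexity and the Hausdorff bound, a point of $K_1$ and a point of $K_2$ within the rotundity radius of $(\xi_1+\xi_2)/2$, use maximality to select the ``right'' disjunct in each rotundity alternative, and then derive a preference cycle $\xi_1\succcurlyeq m_1\succ\xi_2\succcurlyeq m_2\succ\xi_1$. The only cosmetic differences are that the paper argues directly by contradiction to obtain $\Vert\xi_1-\xi_2\Vert\leqslant\varepsilon$ (valid since $\leqslant$ is the negation of $>$), whereas you route through the constructive dichotomy $\Vert\xi_1-\xi_2\Vert<\varepsilon$ or $\Vert\xi_1-\xi_2\Vert>\varepsilon/2$, and that you spell out explicitly the convex combinations the paper leaves implicit.
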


\begin{proof}
Let $S,S^\prime$ be compact, convex subsets of $X$ and let $\xi,\xi^\prime$ be their $\succ$-maximal points.
Fix $\varepsilon>0$ and let $\delta^\prime>0$ be such that if $\Vert x-x^\prime\Vert\geqslant\varepsilon$ ($x,x^\prime\in X$), then for each $z\in B(0,\delta^\prime)$ either $\frac{1}{2}\left(x+x^\prime\right)+z\succ x$ or $\frac{1}{2}\left(x+x^\prime\right)+z\succ x^\prime$, and set $\delta=\min\{\varepsilon,\delta^\prime\}/2$. 

\bigskip
\noindent
If $\rho_H(S,S^\prime)<\delta$, then $\Vert\xi-\xi\Vert$: Let $S,S^\prime$ be such that $\rho_H(S,S^\prime)<\delta$ and suppose that $\Vert\xi-\xi^\prime\Vert>\varepsilon$. Since $S,S^\prime$ are convex
 \[
  S\cap B((\xi+\xi^\prime)/2,\delta)\mbox{ and }S^\prime\cap B((\xi+\xi^\prime)/2,\delta)
 \]
\noindent
are both inhabited; let $z$ be an element of the former set and let $z^\prime$ be an element of the latter. By the maximality of $\xi\in S$ and our choice of $\delta$, $z\succ\xi^\prime$; similarly, $z^\prime\succ\xi$. Therefore
 \[
  \xi\succcurlyeq z\succ\xi^\prime\succcurlyeq z^\prime\succ\xi,
 \]
\noindent
which is absurd. Hence $\Vert\xi-\xi^\prime\Vert\leqslant\varepsilon$.
\end{proof}

\bigskip
\noindent
As a corollary we have the following improvement on the main result of \cite{dsb_ctsdem}. 

\begin{corollary}
\label{DF2}
Let $\succ$ be a uniformly rotund preference relation on a compact,
uniformly rotund subset $X$ of $\mathbf{R}^n$, and let $S$ be a subset of $\mathbf{R}^n\times \mathbf{R}$ such that $\beta(p,w)$ is inhabited for each $(p,w)\in S$. Then for each $p\in\mathbf{R}^n$, the function $w\mapsto F(p,w)$ is uniformly continuous, and for each $w\in\mathbf{R}$, the function $p\mapsto F(p,w)$ is Bishop continuous. In particular, $F$ is Bishop Continuous.
\end{corollary}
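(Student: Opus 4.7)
The plan is to compose Theorem \ref{lGammacts} with Lemma \ref{lGamma}. Since $\succ$ is a uniformly rotund preference relation (and in particular $X$ is uniformly rotund), Theorem \ref{lGammacts} guarantees that $\Gamma$, sending an inhabited compact convex subset of $X$ to its $\succ$-maximal element, is uniformly continuous with respect to the Hausdorff metric. For each $(p,w)\in S$ the budget set $\beta(p,w)$ is an inhabited compact convex subset of $X$ by Lemma \ref{dsb}, so $F(p,w)=\Gamma(\beta(p,w))$ is well-defined and lies in the domain on which $\Gamma$ is uniformly continuous.

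Next, I would invoke Lemma \ref{lGamma}: since $\Gamma$ is uniformly continuous, it follows immediately that for each fixed $p$ the map $w\mapsto F(p,w)$ is uniformly continuous, and for each fixed $w$ the map $p\mapsto F(p,w)$ is Bishop continuous. For the concluding clause ``$F$ is Bishop continuous'' I would strengthen the analysis to show that $(p,w)\mapsto \beta(p,w)$ is \emph{jointly} Bishop continuous into the Hausdorff metric, via the triangle inequality
\[
\rho_H(\beta(p,w),\beta(p',w'))\leqslant \rho_H(\beta(p,w),\beta(p',w))+\rho_H(\beta(p',w),\beta(p',w')),
\]
each summand being controlled by the one-variable moduli implicit in Lemma \ref{lGamma}. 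Composing with the uniformly continuous $\Gamma$ then yields Bishop continuity of $F$ on each compact subset of $S$.

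The main obstacle, already packaged into Theorem \ref{lGammacts} and the proof of Lemma \ref{lGamma}, is the geometric use of uniform rotundness of $X$. Changes in $w$ (for fixed $p$) merely translate the bounding hyperplane of the half-space, which is straightforward to control; but changes in $p$ rotate the hyperplane, and uniform rotundness is precisely what rules out thin flat slivers of $X$ appearing or vanishing under small rotations, thereby giving a uniform Hausdorff modulus for $p\mapsto \beta(p,w)$ on compacta. Given the preceding results, the corollary is essentially their specialisation to the budget-set parametrisation.
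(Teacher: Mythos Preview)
Your proposal is correct and follows exactly the paper's approach: the paper's proof consists of the single sentence ``The result follows directly from Lemma \ref{lGamma} and Theorem \ref{lGammacts},'' which is precisely the composition you describe. Your additional paragraph justifying the joint Bishop continuity of $F$ via the triangle inequality on $\rho_H$ goes beyond what the paper actually writes down (the paper simply asserts the ``In particular'' clause), but it is a reasonable elaboration and does not deviate from the intended argument.
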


\begin{proof}
The result follows directly from Lemma \ref{lGamma} and Theorem \ref{lGammacts}.
\end{proof}

\bigskip
\noindent
Not surprisingly, a less uniform version of rotundness is enough to give us the pointwise continuity of $\Gamma$. A subset $X$ of $\mathbf{R}^n$ is \emph{rotund} if for each $x\in X$ and each $\varepsilon>0$ there exists $\delta>0$ such that for all $x^\prime\in X$, if $\Vert x-x^\prime\Vert\geqslant\varepsilon$, then 
 $$
  \left\{\frac{1}{2}\left(x+x^\prime\right)+z:z\in B(0,\delta)\right\}\subset X.
 $$
\noindent
A preference relation $\succ$ is \emph{rotund} if $X$ is rotund and for each $x\in X,\varepsilon>0$ there exists $\delta>0$ such that if $\Vert x-x^\prime\Vert\geqslant\varepsilon$ ($x^\prime\in X$), then for each $z\in B(0,\delta)$ either $\frac{1}{2}\left(x+x^\prime\right)+z\succ x$ or $\frac{1}{2}\left(x+x^\prime\right)+z\succ x^\prime$.

\begin{theorem}
\label{lGammaptcts}
If $\succ$ is a rotund preference relation, then $\Gamma$ is continuous.
\end{theorem}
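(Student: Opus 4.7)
My plan is to adapt the proof of Theorem~\ref{lGammacts} almost verbatim, taking advantage of the fact that we only need pointwise continuity of $\Gamma$ at a fixed compact convex $S$, so the modulus $\delta$ is allowed to depend on $S$, and hence on $\xi=\Gamma(S)$. The pointwise form of rotundness, being a modulus attached to a single point $x$, supplies exactly this dependence.

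Concretely, fix $\varepsilon>0$ and $S$ compact convex with $\succ$-maximum $\xi$. I would first apply the rotundness of $\succ$ at $x=\xi$ with parameter $\varepsilon$ to obtain $\delta'>0$ such that for every $x'\in X$ with $\|\xi-x'\|\geqslant\varepsilon$ and every $z\in B(0,\delta')$, either $\tfrac{1}{2}(\xi+x')+z\succ\xi$ or $\tfrac{1}{2}(\xi+x')+z\succ x'$. Set $\delta=\min\{\varepsilon,\delta'\}/2$, take any compact convex $S'$ with $\rho_H(S,S')<\delta$ and maximum $\xi'=\Gamma(S')$, and suppose for contradiction that $\|\xi-\xi'\|>\varepsilon$. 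Hausdorff closeness yields $\widetilde{\xi}'\in S$ with $\|\widetilde{\xi}'-\xi'\|<\delta$ and $\widetilde{\xi}\in S'$ with $\|\widetilde{\xi}-\xi\|<\delta$; by convexity $z:=\tfrac{1}{2}(\xi+\widetilde{\xi}')\in S$ and $z':=\tfrac{1}{2}(\widetilde{\xi}+\xi')\in S'$, both lying within $\delta/2<\delta'$ of $\tfrac{1}{2}(\xi+\xi')$. Applying rotundness at $\xi$ with $x'=\xi'$ (permissible since $\|\xi-\xi'\|>\varepsilon$) twice, once to the displacement $z-\tfrac{1}{2}(\xi+\xi')$ and once to $z'-\tfrac{1}{2}(\xi+\xi')$, and ruling out in each case the branch forbidden by maximality of $\xi$ on $S$ (respectively of $\xi'$ on $S'$), delivers $z\succ\xi'$ and $z'\succ\xi$. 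The cycle $\xi\succcurlyeq z\succ\xi'\succcurlyeq z'\succ\xi$ is then the required absurdity, so in fact $\|\xi-\xi'\|\leqslant\varepsilon$.

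I expect no serious obstacle, but two points deserve attention. First, rotundness is asymmetric in its two inputs---the modulus is pinned at the first slot---so I must make sure that $\xi$, not $\xi'$, plays the role of $x$ in \emph{both} invocations; this is automatic because the two invocations share the common pair $(\xi,\xi')$ and differ only in the choice of $z$. Second, one might be tempted to finish the argument by taking a single $w\succ\xi'$ with $w\in S$ and perturbing it to a nearby $\widetilde{w}\in S'$ via continuity of $\succ$; but the relevant continuity modulus would then depend on the unknown $\xi'$, and is not uniformly available. Applying rotundness at $\xi$ twice---once to produce the $S$-witness $z$ and once to produce the $S'$-witness $z'$---neatly sidesteps this difficulty and keeps the proof structurally parallel to that of Theorem~\ref{lGammacts}.
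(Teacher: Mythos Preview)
Your proposal is correct and follows essentially the same route as the paper's own proof, which simply says to pick $\delta$ from rotundness at the fixed point $\xi$ and then argue ``as in the proof of Theorem~\ref{lGammacts}.'' Your explicit construction of $z=\tfrac{1}{2}(\xi+\widetilde{\xi}')\in S$ and $z'=\tfrac{1}{2}(\widetilde{\xi}+\xi')\in S'$ via convexity, together with your remark that both invocations of rotundness must have $\xi$ in the first slot, are useful clarifications of details the paper leaves implicit, but the strategy is identical.
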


\begin{proof}
The proof is, of course very similar to the proof of Theorem \ref{lGammacts}. Let $S$ be a compact, convex subset of $X$ and let $\xi$ be the unique maximal element of $S$. Fix $\varepsilon>0$. Pick $\delta>0$ such that if $\Vert \xi-x\Vert\geqslant\varepsilon$ ($x\in X$), then for each $z\in B(0,\delta)$ either $\frac{1}{2}\left(\xi+x\right)+z\succ \xi$ or $\frac{1}{2}\left(\xi+x\right)+z\succ x^\prime$. If $S^\prime$ is a compact, convex subset of $X$, with maxima $\xi^\prime$, such that $\rho_H(S,S^\prime)<\delta$, then the assumption that $\Vert\xi-\xi^\prime\Vert>\varepsilon$ leads to a contradiction as in the proof of Theorem \ref{lGammacts}.
\end{proof}

\bigskip
\noindent
By the next result, Theorem \ref{lGammacts} can be used to improve on Theorem \ref{DF1}.  

\begin{proposition}
\label{p1}
Assume Brouwer's fan theorem. If $\succ$ is continuous and strictly convex, then $\succ$ is uniformly rotund.
\end{proposition}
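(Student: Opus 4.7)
The plan is to interpret uniform rotundness as a constructive tube lemma for an open set in a product space and to extract the uniform $\delta$ by applying the uniform continuity theorem for weakly continuous predicates (Theorem \ref{UCTPred}). Fix $\varepsilon>0$ and consider
\[
U := \{(x, x', z) \in X \times X \times \mathbf{R}^n : \tfrac{1}{2}(x+x')+z \succ x\ \text{or}\ \tfrac{1}{2}(x+x')+z \succ x'\},
\]
which is open by the continuity of $\succ$, and the compact slice $K_\varepsilon := \{(x, x') \in X \times X : \Vert x - x'\Vert \geqslant \varepsilon\}$. By strict convexity each $(x,x')\in K_\varepsilon$ satisfies $(x,x',0)\in U$, so $K_\varepsilon \times \{0\} \subset U$, and the conclusion of uniform rotundness of $\succ$ is precisely the existence of $\delta > 0$ with $K_\varepsilon \times \overline{B(0, \delta)} \subset U$.

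Following the template of Theorem \ref{DF1}, I would parametrise $K_\varepsilon$ by a nonexpansive surjection $\varphi\colon[0, 1]^m \to K_\varepsilon$ and introduce the predicate
\[
P(\alpha, \delta) \iff \forall z \in \overline{B(0, \delta)}\bigl(\tfrac{1}{2}(x+x')+z \succ x\ \text{or}\ \tfrac{1}{2}(x+x')+z \succ x'\bigr),
\]
where $(x, x') = \varphi(\alpha)$. Pointwise existence of $\delta$ with $P(\alpha, \delta)$ (condition (i) of a weakly continuous predicate) follows from strict convexity, which picks out a disjunct at $z = 0$, together with the openness of the graph of $\succ$, which propagates that disjunct to a small ball around $z = 0$. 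Local stability (condition (ii)) reduces to an elementary midpoint-shift argument combined with the openness of the graph of $\succ$ at the reference points $x, x'$. Theorem \ref{UCTPred} then converts pointwise existence into uniform existence of $\delta$, which is exactly the uniform rotundness of $\succ$.

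The main obstacle is the stability condition (ii). Moving $\alpha$ to a nearby $\alpha'$, with $(\tilde x, \tilde x') := \varphi(\alpha')$ close to $(x, x')$, forces us to trade, for each $\tilde z \in \overline{B(0, \delta)}$, the condition $\tfrac{1}{2}(x+x') + z^* \succ x$ (where $z^* := \tilde z + \tfrac{1}{2}(\tilde x + \tilde x') - \tfrac{1}{2}(x + x')$) for $\tfrac{1}{2}(\tilde x + \tilde x') + \tilde z \succ \tilde x$. The openness of the graph of $\succ$ closes this gap, but the disjunct that holds can depend on $\tilde z$, so openness must be applied uniformly in $\tilde z$ over the compact ball $\overline{B(0, \delta)}$. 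This pointwise-to-uniform bootstrap is exactly the role of FT in the weakly continuous predicate framework, and the same apparatus that delivers the final uniformisation of $\delta$ also justifies this inner uniformity. I would also note that strict convexity of $\succ$ does not by itself imply that $X$ is uniformly rotund, which is part of the formal definition of a uniformly rotund preference relation; this ingredient of the definition must either be assumed from context or verified by a parallel tube-lemma argument applied to the convex open condition $\tfrac{1}{2}(x+x')+z \in X$.
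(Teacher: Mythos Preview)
Your approach is essentially the paper's own: fix $\varepsilon$, work over the compact set $C=\{(x,x')\in X^2:\Vert x-x'\Vert\geqslant\varepsilon\}$, package the rotundity condition as a continuous predicate in the sense of the appendix, and invoke Theorem \ref{UCTPred} to upgrade the pointwise $\delta$ to a uniform one. The paper's predicate includes the extra disjunct $\Vert x-x'\Vert<\varepsilon$ but is just as terse about verifying stability condition (ii) and says nothing about the uniform rotundity of $X$ itself; your caveats on both points are well taken and apply equally to the paper's argument as written.
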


\begin{proof}
 Without loss of generality,
  \[
   C=\{(x,y)\in X^2:\Vert x-y\Vert\geqslant\varepsilon\}
  \]
 \noindent
 is compact; moreover
  \[
   P((x,y),\varepsilon,\delta)\equiv \Vert x-y\Vert<\varepsilon\vee\forall_{z\in B}((x+y)/2,\delta)(z\succ x\vee z\succ y)
  \]
 \noindent
 defines a continuous predicate on $C$. Hence $P$ is uniformly continuous by Theorem \ref{UCTPred}, but the uniformity of $P$ says precisely that $\succ$ is uniformly rotund.
\end{proof}

\begin{corollary}
 Suppose Brouwer's full fan theorem holds. If $\succ$ is continuous and strictly convex, then $\Gamma$ is uniformly continuous.
\end{corollary}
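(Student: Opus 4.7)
The plan is to observe that this corollary is an immediate chain of the two results just established. Under the fan theorem, Proposition \ref{p1} promotes the combination of continuity and strict convexity of $\succ$ to the stronger property of uniform rotundness. Once we have uniform rotundness in hand, Theorem \ref{lGammacts} directly delivers the uniform continuity of $\Gamma$ on the space of inhabited, compact, convex subsets of $X$ equipped with the Hausdorff metric.

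So the proof I would write is essentially a one-line composition. First I would invoke Proposition \ref{p1}, which requires exactly the hypotheses we are given (Brouwer's full fan theorem, continuity and strict convexity of $\succ$), to conclude that $\succ$ is uniformly rotund. Then I would apply Theorem \ref{lGammacts} to this uniformly rotund preference relation to obtain that $\Gamma$ is uniformly continuous.

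There is no real obstacle: the content of the corollary is already fully contained in the two preceding results, and the point of stating it is to record the consequence for $\Gamma$ that one obtains by combining the strengthening principle (Proposition \ref{p1}) with the structural result on maxima of compact convex sets (Theorem \ref{lGammacts}). If anything needed a sentence of justification, it would only be to remind the reader that the hypotheses of Proposition \ref{p1} match those of the corollary verbatim and that the conclusion of Proposition \ref{p1} is precisely the hypothesis of Theorem \ref{lGammacts}, so no intermediate lemma, modulus chasing, or rescaling is required.
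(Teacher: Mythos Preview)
Your proposal is correct and matches the paper's intended argument exactly: the corollary is stated without proof precisely because it follows immediately by composing Proposition \ref{p1} with Theorem \ref{lGammacts}. There is nothing to add.
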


\vspace{.5cm}

\subsection*{Appendix: The fan theorem and continuous predicates}

\noindent
Let $2^\mathbf{N}$ denote the space of binary sequences, Cantor's space, and let $2^*$ be the set of finite binary sequences. A subset $S$ of $2^*$ is \emph{decidable} if for each $a\in 2^*$ either $a\in S$ or $a\notin S$. For two elements $u=(u_1,\ldots,u_m),v=(v_1,\ldots,v_n)\in 2^*$ we denote by $u\frown v$ the \emph{concatenation}
 $$
  (u_1,\ldots,u_m,v_1,\ldots,v_n)
 $$
\noindent
of $u$ and $v$. For each $\alpha\in 2^\mathbf{N}$ and each $N\in\mathbf{N}$ we denote by $\overline{\alpha}(N)$ the finite binary sequence consisting of the first $N$ terms of $\alpha$. A set $B$ of finite binary sequences is called a \emph{bar} if for each $\alpha\in 2^\mathbf{N}$ there exists $N\in\mathbf{N}$ such that $\overline{\alpha}(N)\in B$. A bar $B$ is said to be \emph{uniform} if there exists $N\in \mathbf{N}$ such that for each $\alpha\in2^\mathbf{N}$ there is $n\leqslant N$ with $\alpha(n)\in B$. The strongest form of Brouwer's fan theorem is:
 \begin{quote}
  \textbf{FT}: Every bar is uniform.
 \end{quote}
\noindent
Brouwer introduced the fan theorem as a constructive principle and gave a philosophical justification for its use; it is no longer considered a valid principle of core constructive mathematics, but is still used freely by some schools (see \cite{BR}).

\bigskip
\noindent
A predicate $P$ on $S\times \mathbf{R}^+\times \mathbf{R}^+$ is said to be a \emph{continuous predicate on} $S$ if
 \begin{itemize}
 \item[(i)] for each $x\in S$ and each $\varepsilon>0$, there exists $\delta>0$ such that $P(x,\varepsilon,\delta)$;
 \item[(ii)] if $P(x,\varepsilon,\delta)$ and $|x-y|<\delta^\prime<\delta$, then $P(y,\varepsilon,\delta-\delta^\prime)$.
\end{itemize}
\noindent
If in addition, for each $\varepsilon>0$, there exists $\delta>0$ such that $P(x,\varepsilon,\delta)$ for all $x\in S$, then $P$ is a \emph{uniformly continuous predicate on} $S$.

\begin{theorem}
\label{UCT}
The statement
 \begin{quote}
  Every continuous predicate on $[0,1]$ is uniformly continuous. 
 \end{quote}
\noindent
is equivalent to the full fan theorem.
\end{theorem}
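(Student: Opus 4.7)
The plan is to prove the two implications separately, using the standard correspondence between $2^*$ and the dyadic sub-intervals of $[0,1]$: for $u=(u_1,\ldots,u_n)\in 2^*$ set $x_u=\sum_{i=1}^n u_i 2^{-i}$, and for $\alpha\in 2^{\mathbf{N}}$ set $x_\alpha=\sum_{i\geqslant 1}\alpha(i)2^{-i}$.

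For the forward direction, assume $\textbf{FT}$ and let $P$ be a continuous predicate on $[0,1]$ with $\varepsilon>0$ fixed. I would define
\[
B=\{u\in 2^*:P(x_u,\varepsilon,2^{-|u|+1})\}
\]
and verify that $B$ is a bar: given $\alpha\in 2^{\mathbf{N}}$, condition (i) produces $\delta>0$ with $P(x_\alpha,\varepsilon,\delta)$; taking $N$ so large that $2^{-N+2}<\delta$ and using $|x_\alpha-x_{\overline{\alpha}(N)}|\leqslant 2^{-N}$, condition (ii) upgrades this to $P(x_{\overline{\alpha}(N)},\varepsilon,\delta-2^{-N})$, and so $\overline{\alpha}(N)\in B$. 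Applying $\textbf{FT}$ gives a uniform bound $M$. For arbitrary $x\in[0,1]$, I choose any binary expansion $\alpha$ of $x$, obtain $n\leqslant M$ with $\overline{\alpha}(n)\in B$, and apply (ii) a final time to conclude $P(x,\varepsilon,2^{-M})$, since $|x-x_{\overline{\alpha}(n)}|\leqslant 2^{-n}$ and $2^{-n+1}-2^{-n}\geqslant 2^{-M}$.

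For the reverse direction, given a bar $B$ I would construct a continuous predicate whose uniform continuity witnesses uniformity of $B$. The cleanest definition is
\[
P(x,\varepsilon,\delta)\Leftrightarrow\exists N\in\mathbf{N}\ \forall\beta\in 2^{\mathbf{N}}\bigl(|x-x_\beta|<\delta\Rightarrow\exists n\leqslant N:\overline{\beta}(n)\in B\bigr).
\]
Property (ii) is immediate from the definition (shrinking $\delta$ shrinks the set of relevant $\beta$). For (i), one chooses $\delta$ small enough that any $\beta$ with $|x-x_\beta|<\delta$ agrees with one of the (at most two) binary expansions of $x$ on sufficiently long initial segments, so the bar property applied to those expansions gives a local bound $N$. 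Once $P$ is shown continuous, uniform continuity yields a single $N$ valid for every $x\in[0,1]$, and since every $\alpha\in 2^{\mathbf{N}}$ arises as some $\beta$ with $|x_\alpha-x_\beta|<\delta$, this $N$ uniformly bars every $\alpha$.

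The main obstacle is verifying condition (i) in the reverse direction: at dyadic points $x$ there are two binary expansions, and constructively one cannot decide whether a given $x$ is dyadic. I expect to handle this by taking $\delta$ small enough that $B(x,\delta)$ is covered by two basic cylinder neighbourhoods (one from each expansion); the bar property applied separately to each supplies the local $N$, regardless of whether the two expansions coincide. This uniform treatment of both expansions—rather than a case split on dyadicity—is the one genuinely delicate step, the rest being the routine encoding described above.
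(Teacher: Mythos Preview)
Your outline matches the paper's, but there is a genuine constructive gap in both directions, and it is not the one you flag. The step ``for arbitrary $x\in[0,1]$, choose any binary expansion $\alpha$ of $x$'' is unavailable in Bishop's framework: the map $\alpha\mapsto x_\alpha=\sum_i\alpha(i)2^{-i}$ is not constructively onto $[0,1]$, and the assertion that every real has a binary expansion is equivalent to LLPO, which \textbf{FT} does \emph{not} imply. So once \textbf{FT} hands you the uniform bound $M$ on your bar, you cannot pass from that to an arbitrary $x$ as you describe. The same issue undermines your reverse direction: your verification of~(i) speaks of ``the (at most two) binary expansions of $x$'' and applies the bar hypothesis to them, but for a general $x$ you cannot produce those sequences, so you have nothing to feed into the bar. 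The delicate point is not deciding \emph{whether} $x$ is dyadic; it is that $x$ need not have any binary expansion at all.

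The paper avoids this by replacing the dyadic encoding with an \emph{overlapping} one, $f(\alpha)=\sum_{n\geqslant 1}(2/3)^{n-1}\cdot\tfrac{1}{2}\bigl((-1)^{a_n}+1\bigr)$. Because the two sub-intervals at each stage overlap in their middle third, cotransitivity of the order lets one constructively pick a branch at every level, so $f$ is provably surjective and every $x$ acquires a preimage to work with. Accordingly the bar is defined from intervals $\bigl(f(a\frown\mathbf{0}),f(a\frown\mathbf{i}_1)\bigr)$ rather than from the single points $x_u$, and in the reverse direction the predicate quantifies only over exact preimages $f(\alpha)=x$ (with $B$ first taken closed under extension). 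Your forward direction can alternatively be repaired without changing the encoding, by replacing ``a binary expansion of $x$'' with a sufficiently fine dyadic \emph{approximation} to $x$ and redoing the radius bookkeeping; but as written, both directions rest on a step that fails constructively.
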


\begin{proof}
 Let $P$ be a continuous predicate on $[0,1]$ and fix $\varepsilon>0$. Define a uniformly continuous function $f$ from $2^{\mathbf{N}}$ onto $[0,1]$ by
 $$
  f(\mathbf{\alpha})=\sum_{n=1}^\infty\left(\frac{2}{3}\right)^{n-1}\left(\frac{(-1)^{a_n}+1}{2}\right),
 $$
\noindent
where $\mathbf{\alpha}=\left(a_n\right)_{n\geqslant1}$, and let
 $$
  B=\left\{a\in2^*:\forall_{x\in\left(f(a\frown\mathbf{0}),f(a\frown \mathbf{i}_1)\right)}P\left(x,\varepsilon,(2/3)^{|a|}\right)\right\},
 $$
\noindent
where $\frown$ denotes concatenation, $\mathbf{0}=(0,\ldots)$, and $\mathbf{i}_1=(1,0,\ldots)$. We show that $B$ is a bar. Let $\mathbf{\alpha}\in2^\mathbf{N}$, and, using (i), pick $\delta>0$ such that $P(f(\mathbf{\alpha}),\varepsilon,\delta)$. Pick $n$ such that $(2/3)^{n-1}<2\delta$. Then 
 $$
  \left(f(\overline{\mathbf{\alpha}}(n)\frown\mathbf{0}),f(\overline{\mathbf{\alpha}}(n)\frown \mathbf{i_1})\right)_{(2/3)^n}\subset (f(\mathbf{\alpha})-\delta,f(\mathbf{\alpha}+\delta)).
 $$
\noindent
It follows from condition (ii) that $\mathbf{\alpha}(n)\in B$; whence $B$ is a bar. 

\bigskip
\noindent
By Brouwer's fan theorem, there exists $N>0$ such that for all $\mathbf{\alpha}\in2^\mathbf{N}$ there is $n<N$ with $\mathbf{\alpha}(n)\in B$. Then, by condition (ii), $P\left(x,\varepsilon,(2/3)^N\right)$ for all $x\in[0,1]$.

\bigskip
\noindent
Conversely, let $B$ be a bar that is closed under extension and define a predicate $P$ by 
 \[
  P(x,\varepsilon,\delta)\equiv\forall_x\left(f(x)=\alpha\rightarrow\exists_{N>0}(2^{-N}>\delta\wedge\overline{\alpha}(N)\in B)\right).
 \]
\noindent
It is easy to show that $P$ is a pointwise continuous predicate. Hence $P$ is uniformly continuous; in particular, there exists $\delta>0$ such that $P(x,1,\delta)$ holds for all $x\in[0,1]$. Pick $N>0$ such that $2^{-N}<\delta$. Then for all $\alpha\in2^\mathbf{N}$, $\overline{\alpha}(N)\in B$.
\end{proof}

\bigskip
\noindent
Here is the result we need for the proof of Theorem \ref{DF1}.

\begin{theorem}
\label{UCTPred}
  Assume the fan theorem and let $P$ be a pointwise continuous predicate on $[0,1]^n$. Then $P$ is a uniformly continuous predicate on $[0,1]^n$.
\end{theorem}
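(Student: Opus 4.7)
The natural approach is induction on $n$, with the base case $n=1$ being Theorem \ref{UCT} itself (whose proof already invokes \textbf{FT}). I would equip $[0,1]^{n+1}$ with, say, the max-norm, so that horizontal slices $\{x\}\times[0,1]^n$ and vertical fibres $[0,1]\times\{y\}$ are isometric copies of the lower-dimensional cubes; any equivalent norm can be used at the cost of bounded constants. Assume the theorem for $n$, and let $P$ be a pointwise continuous predicate on $[0,1]^{n+1}$.

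For each fixed $x_0\in[0,1]$, define a sliced predicate on $[0,1]^n$ by
\[
R_{x_0}(y,\varepsilon,\delta)\equiv P((x_0,y),\varepsilon,\delta).
\]
The map $y\mapsto(x_0,y)$ is an isometry, so $R_{x_0}$ inherits conditions (i) and (ii) from $P$ and is a pointwise continuous predicate on $[0,1]^n$. By the induction hypothesis it is uniformly continuous: for every $\varepsilon>0$ there exists $\delta(x_0,\varepsilon)>0$ such that $P((x_0,y),\varepsilon,\delta(x_0,\varepsilon))$ holds for every $y\in[0,1]^n$. Now set
\[
S(x,\varepsilon,\delta)\equiv\forall_{y\in[0,1]^n}P((x,y),\varepsilon,\delta).
\]
Condition (i) for $S$ is exactly what the previous sentence supplies. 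For (ii), suppose $S(x,\varepsilon,\delta)$ and $|x-x'|<\delta'<\delta$; for every $y$ the points $(x,y)$ and $(x',y)$ lie at distance $|x-x'|$, so (ii) for $P$ yields $P((x',y),\varepsilon,\delta-\delta')$ uniformly in $y$, whence $S(x',\varepsilon,\delta-\delta')$.

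Thus $S$ is a pointwise continuous predicate on $[0,1]$, and Theorem \ref{UCT} (available by \textbf{FT}) supplies a uniform modulus $\delta(\varepsilon)>0$ with $S(x,\varepsilon,\delta(\varepsilon))$ for every $x\in[0,1]$. Unfolding the definition of $S$ gives $P((x,y),\varepsilon,\delta(\varepsilon))$ for every $(x,y)\in[0,1]^{n+1}$, completing the induction. The only real subtlety is keeping the norm conventions consistent so that horizontal slicing is genuinely distance-preserving; beyond that the argument is a routine Fubini-style reduction of the higher-dimensional uniform continuity theorem for predicates to the one-dimensional case.
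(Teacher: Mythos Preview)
Your proof is correct and follows essentially the same route as the paper: induction on the dimension, slicing off one coordinate to obtain a family of lower-dimensional continuous predicates, applying the induction hypothesis to make each slice uniform, bundling the result into a one-dimensional predicate $S$ (the paper calls it $P^\prime$), and invoking Theorem~\ref{UCT}. The only differences are cosmetic---you slice the first coordinate rather than the last and spell out the verification of condition~(ii) for $S$ and the norm convention a bit more carefully than the paper does.
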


\begin{proof}
 We proceed by induction on $n$. The case $n=1$ is just one direction of Theorem \ref{UCT}. Suppose that the result holds for predicates on $[0,1]^{n-1}$, and let $P$ be a predicate on $[0,1]^n$. For each $x$ in $[0,1]$ let $P_x$ be the predicate on $[0,1]$ given by
\[
 P_x(z,\varepsilon,\delta)\Leftrightarrow P((z,x),\varepsilon,\delta).
\]
\noindent
Then, since $P$ is continuous, $P_x$ is a continuous predicate for each $x\in[0,1]$. It follows from our induction hypothesis that each $P_x$ is uniformly continuous. Define a predicate $P^\prime$ on $[0,1]$ by 
\[
 P^\prime(x,\varepsilon,\delta)\ \ \Leftrightarrow \forall_{y\in[0,1]^{n-1}}P_x(y,\varepsilon,\delta).
\]
\noindent
It is easily shown that $P^\prime$ is a continuous predicate and that $P^\prime(x,\varepsilon,\delta)$ holds for all $x\in[0,1]$ if and only if $P(x,\varepsilon,\delta)$ holds for all $x\in[0,1]^n$. By Lemma \ref{UCT}, $P^\prime$ is uniformly continuous; whence $P$ is uniformly continuous.
\end{proof}

\end{document}